\documentclass[11pt]{amsart}
\usepackage{amsmath,amsthm,amssymb,xypic,array}
\usepackage[T1]{fontenc}
\usepackage{amsfonts}
\usepackage{amsmath}
\usepackage{pgf,tikz}
\usepackage{mathrsfs}
\usetikzlibrary{arrows}
\usepackage{amssymb}
\usepackage{amsthm}
\usepackage{setspace}
\usepackage[cp850]{inputenc}
\usepackage{hyperref}
\usepackage{graphicx}
\usepackage{vmargin}
\usepackage{fancyhdr}
\usepackage{booktabs}
\usepackage{longtable}
\usepackage{geometry}
\usetikzlibrary{trees}

\setpapersize{A4}
\setcounter{MaxMatrixCols}{30}
\providecommand{\U}[1]{\protect\rule{.1in}{.1in}}
\providecommand{\U}[1]{\protect\rule{.1in}{.1in}}
\providecommand{\U}[1]{\protect\rule{.1in}{.1in}}
\providecommand{\U}[1]{\protect\rule{.1in}{.1in}}
\providecommand{\U}[1]{\protect\rule{.1in}{.1in}}
\input{xy}
\xyoption{all}
\setlength{\textheight}{250mm} 
\setlength{\topmargin}{1cm}
\setlength{\textwidth}{158mm} 
\setlength{\evensidemargin}{2.57cm}
\setlength{\oddsidemargin}{2.57cm}

\cfoot{}
\rhead[\fancyplain{}{\bfseries\leftmark}]{\fancyplain{}{\bfseries\thepage}}
\lhead[\fancyplain{}{\bfseries\thepage}]{\fancyplain{}{\bfseries\rightmark}}

\theoremstyle{theorem}

\newtheorem{Theorem}{Theorem}[section]
\newtheorem{theoremn}{Theorem}

\newtheorem{Lemma}[Theorem]{Lemma}
\newtheorem{Proposition}[Theorem]{Proposition}

\theoremstyle{definition}
\newtheorem{Construction}[Theorem]{Construction}
\newtheorem{Definition}[Theorem]{Definition}
\newtheorem{Remark}[Theorem]{Remark}

\numberwithin{equation}{section}
\newcommand{\arXiv}[1]{\href{http://arxiv.org/abs/#1}{arXiv:#1}}
\newcommand{\Spec}{\operatorname{Spec}}
\newcommand{\logpol}{\operatorname{log}}
\newcommand{\Aut}{\operatorname{Aut}}

\newcommand{\ch}{\operatorname{char}}

\newcommand{\Hom}{\operatorname{Hom}}
\newcommand{\Ext}{\operatorname{Ext}}

\newcommand{\im}{\operatorname{Im}}

\newcommand{\codim}{\operatorname{codim}}
\newcommand{\Sing}{\operatorname{Sing}}
\newcommand{\mult}{\operatorname{mult}}

\def\bibaut#1{{\sc #1}}
\newcommand{\quot}{/\hspace{-1.2mm}/}

\begin{document}

\title{On the rigidity of moduli of weighted pointed stable curves}

\author[Barbara Fantechi]{Barbara Fantechi}
\address{\sc Barbara Fantechi\\
SISSA\\
via Bonomea 265\\
34136 Trieste\\ Italy}
\email{fantechi@sissa.it}

\author[Alex Massarenti]{Alex Massarenti}
\address{\sc Alex Massarenti\\
Universidade Federal Fluminense\\
Rua M\'ario Santos Braga\\
24020-140, Niter\'oi, Rio de Janeiro\\ Brazil}
\email{alexmassarenti@id.uff.br}

\date{\today}
\subjclass[2010]{Primary 14H10; Secondary 14D22, 14D23, 14D06}
\keywords{Moduli of weighted curves curves, infinitesimal deformations, positive characteristic, automorphisms}

\maketitle

\begin{abstract}
Let $\overline{\mathcal{M}}_{g,A[n]}$ be the Hassett moduli stack of weighted stable curves, and let $\overline{M}_{g,A[n]}$ be its coarse moduli space. These are compactifications of $\mathcal{M}_{g,n}$ and $M_{g,n}$ respectively, obtained by assigning rational weights $A = (a_{1},...,a_{n})$, $0< a_{i} \leq 1$ to the markings; they are defined over $\mathbb{Z}$, and therefore over any field. We study the first order infinitesimal deformations of $\overline{\mathcal{M}}_{g,A[n]}$ and $\overline{M}_{g,A[n]}$. In particular, we show that $\overline{M}_{0,A[n]}$ is rigid over any field, if $g\geq 1$ then $\overline{\mathcal{M}}_{g,A[n]}$ is rigid over any field of characteristic zero, and if $g+n > 4$ then the coarse moduli space $\overline{M}_{g,A[n]}$ is rigid over an algebraically closed field of characteristic zero. Finally, we take into account a degeneration of Hassett spaces parametrizing rational curves obtained by allowing the weights to have sum equal to two. In particular, we consider such a Hassett $3$-fold which is isomorphic to the Segre cubic hypersurface in $\mathbb{P}^4$, and we prove that its family of first order infinitesimal deformations is non-singular of dimension ten, and the general deformation is smooth.
\end{abstract}

\setcounter{tocdepth}{1}
\tableofcontents

\section*{Introduction}
In \cite{Ha} B. Hassett introduced new compactifications $\overline{\mathcal{M}}_{g,A[n]}$ of the moduli stack $\mathcal{M}_{g,n}$  parametrizing smooth genus $g$ curves with $n$ marked points, where the notion of stability is defined in terms of a fixed vector of rational weights $A[n] = (a_{1},...,a_{n})$, on the markings. The classical Deligne-Mumford compactification corresponds to the weights $a_1 = ... = a_n = 1$; Hassett construction requires that $0<a_i\le 1$ for every $i$ and that $\sum a_i>2-2g$.

As the stack $\overline{\mathcal{M}}_{g,n}$, the stacks $\overline{\mathcal{M}}_{g,A[n]}$ are smooth and proper over $\mathbb{Z}$, and therefore $\overline{\mathcal{M}}_{g,A[n]}^R$ is defined over any commutative ring $R$ via base change. By \cite{KM} the formation of the coarse moduli space is compatible with flat base change; we write $\overline{M}_{g,n}^R$ for the coarse moduli scheme of $\overline{\mathcal{M}}_{g,n}^R$, and refer to it as a Hassett moduli space. Again in analogy with the Deligne-Mumford case, Hassett stacks for $g=0$ are already schemes, hence coincide with the corresponding Hassett spaces.

Hassett spaces are central objects in the study of the birational geometry of $\overline{M}_{g,n}$. Indeed, in genus zero some of these spaces appear as intermediate steps of the blow-up construction of $\overline{M}_{0,n}$ developed by M. Kapranov in \cite{Ka} and some of them turn out to be Mori Dream Spaces \cite[Section 6]{AM}, while in higher genus they may be related to the LMMP on $\overline{M}_{g,n}$ \cite{Moo}.

In this paper we push forward the techniques developed in \cite{FM} to study the infinitesimal deformations of Hassett moduli stacks and spaces over an arbitrary field. The results in Theorems \ref{righas} and \ref{rigc} can be summarized as follows.

\begin{theoremn}
Let $K$ be an arbitrary field, and $n\ge 3$ an integer. Then the genus zero Hassett moduli space $\overline{M}_{0,A[n]}^K$ is rigid for any vector of weights $A[n]$.

Let $g\geq 1$ and assume $K$ is a field of characteristic zero. Then Hassett stack $\overline{\mathcal{M}}_{g,A[n]}^K$ is rigid for any vector of weights $A[n]$.
\end{theoremn}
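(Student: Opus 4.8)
The plan is to establish rigidity in the cohomological form $H^1(X,T_X)=0$, where $X$ denotes the smooth projective variety $\overline{M}_{0,A[n]}^K$ in the genus zero case and the smooth proper Deligne--Mumford stack $\overline{\mathcal M}_{g,A[n]}^K$ in the higher genus case; since this group is precisely the space of first order infinitesimal deformations, its vanishing is exactly rigidity. Following the techniques of \cite{FM}, the engine is the relative tangent sequence of a forgetful morphism together with the Leray spectral sequence, used to run an induction that strips off marked points one at a time. The two structural inputs, both due to Hassett \cite{Ha}, are the \emph{reduction morphisms} $\rho_{A,B}\colon\overline{M}_{g,B[n]}\to\overline{M}_{g,A[n]}$, defined whenever $b_i\ge a_i$ for all $i$, which let me move between weight data, and the identification of the \emph{universal curve} $\pi\colon\overline{M}_{g,(A,\eps)[n+1]}\to\overline{M}_{g,A[n]}$ with the Hassett space obtained by adjoining a marking of sufficiently small weight $\eps$.

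For the genus zero statement I would run the universal curve induction on $n$. Writing $0\to T_\pi\to T\to\pi^*T_{\mathrm{base}}\to 0$ for the relative tangent sequence of $\pi$, the decisive point is that the fibers have arithmetic genus zero, so $R^1\pi_*\mathcal O=0$ and the projection formula gives $H^i(\pi^*T_{\mathrm{base}})=H^i(T_{\mathrm{base}})$ with no residual contributions, while $R^i\pi_*T_\pi$ is controlled by the Kodaira--Spencer identification. The reduction morphisms $\rho_{A,B}$ then bring an arbitrary weight vector into a convenient chamber and supply the base case: in a suitable small-weight chamber $\overline{M}_{0,A[n]}$ is an explicit smooth rational variety (a projective space, or a smooth toric space of Losev--Manin type), whose rigidity $H^1(T)=0$ is classical and valid over any field. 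Because in genus zero each elementary reduction is a blow up of a smooth boundary stratum --- itself a product of lower dimensional Hassett spaces --- the inductive comparison only ever involves coherent cohomology of (symmetric powers of) locally free sheaves: via $0\to T_{\tilde X}\to\rho^*T_X\to\mathcal F\to 0$ and $R\rho_*\mathcal O=\mathcal O$, the vanishing of $H^1(T_{\tilde X})$ follows from $H^1(T_X)=0$ together with the vanishing of the cohomology of the normal bundle sheaf $\mathcal F$ on the exceptional divisor, which is furnished by the inductive hypothesis. No Hodge theory enters at any stage, which is exactly why the genus zero conclusion holds \emph{in every characteristic}.

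For $g\ge 1$ I would again peel off markings using the universal curve $\pi$ and the sequence $0\to T_\pi\to T_{\overline{\mathcal M}}\to\pi^*T_{\mathrm{base}}\to 0$, pushing forward and feeding in the Kodaira--Spencer isomorphism --- valid because we work on the \emph{stack}, which is a fine moduli space --- to identify $R^1\pi_*T_\pi$ with tangent data of the base. The essential difference from genus zero is that now $R^1\pi_*\mathcal O_{\mathcal C}=\mathbb E^\vee$, the dual of the Hodge bundle, is nonzero, so the projection formula produces extra terms of the shape $H^\bullet(\overline{\mathcal M}\otimes\mathbb E^\vee)$ that must be shown to vanish. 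This is where I would invoke characteristic zero: the required vanishing rests on Hodge theoretic input and Kodaira type vanishing for the Hodge bundle on the moduli stack, which become inaccessible in positive characteristic. The base cases $\overline{\mathcal M}_{g}$ and $\overline{\mathcal M}_{g,1}$ are the rigidity of the Deligne--Mumford stacks established in \cite{FM}, and the passage to arbitrary weights is once more effected by the reduction morphisms, tracking how $H^1(T)$ changes under the associated contractions.

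The main obstacle I anticipate is twofold, and both difficulties sit at the inductive step rather than at the base cases. First, the Hassett forgetful and reduction morphisms are genuinely more delicate than their Deligne--Mumford counterparts: forgetting a marking of non-negligible weight forces a simultaneous reduction of weights and a contraction of the universal curve, so $\pi$ is not smooth and $\Omega_\pi$ must be handled through the relative dualizing sheaf with its corrections at nodes and markings; keeping the weight chamber bookkeeping consistent while computing $R^i\pi_*$ is the technical heart of the genus zero argument. Second, in positive genus the whole difficulty collapses onto the vanishing of the Hodge bundle contributions $H^\bullet(\overline{\mathcal M}\otimes\mathbb E^\vee)$; controlling these is precisely what confines the clean statement to characteristic zero and explains why the coarse space in higher genus must be treated separately, under the stronger hypotheses of Theorem \ref{rigc}.
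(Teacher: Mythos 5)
Your overall frame (reduce everything to $H^1(-,T)=0$, use Hassett's reduction morphisms, take the rigidity of $\overline{\mathcal{M}}_{g,n}$ from \cite{FM} and \cite{Hac} as the base case, and locate the characteristic-zero hypothesis in a Kodaira-type vanishing) is consistent with the paper, but the two engines you propose do not assemble into a proof, and each hides a genuine gap.

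First, the direction of your induction along reduction morphisms is reversed. A reduction morphism $\overline{M}_{0,A[n]}\to\overline{M}_{0,B[n]}$ exists only when $a_i\geq b_i$ for every $i$; consequently an arbitrary weight vector (for instance the symmetric one with all $a_i$ slightly above $2/n$) admits no reduction to $\mathbb{P}^{n-3}$ or to a Losev--Manin model, since those correspond to weight data containing entries equal to $1$. So ``bringing an arbitrary weight vector into a small-weight chamber and building up by blow-ups'' does not reach all Hassett spaces. The only space dominating everything is $\overline{M}_{0,n}$ itself, so the induction must descend the tower: one must deduce $H^1(T_X)=0$ for the blown-\emph{down} space from $H^1(T_{\widetilde{X}})=0$ for the blown-\emph{up} one. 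Your sequence $0\to T_{\widetilde{X}}\to\rho^{*}T_X\to\mathcal{F}\to 0$ is set up for the opposite implication. The descent instead uses the identification $T^{1}Def_{(X,Z)}=H^1(\widetilde{X},T_{\widetilde{X}})$ (Lemma \ref{defbu}) together with the exact sequence $H^{0}(Z,N_{Z/X})\to T^{1}Def_{(X,Z)}\to T^{1}Def_{X}\to H^{1}(Z,N_{Z/X})$, and the whole burden falls on proving $H^{1}(Z,N_{Z/X})=0$ for the center $Z=\rho(D_{I,J})$, itself a smaller Hassett space. That vanishing is not ``furnished by the inductive hypothesis'' on rigidity: it is a separate statement, requiring first the computation $N_{Z/X}\cong(\psi_{1}^{\vee})^{\oplus(r-1)}$ (Proposition \ref{normalbun}) and then an independent induction showing $H^{j}(\overline{\mathcal{M}}_{g,C[m]},\psi_{i}^{\vee})=0$ for $j>0$ on every Hassett stack (Proposition \ref{psihassett}). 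Your proposal contains neither ingredient, and they are the technical core of the argument.

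Second, the universal-curve induction on $n$ that you place at the center of both cases is neither needed nor, as you yourself concede, workable as stated: forgetting a marking of non-negligible weight is not a family of weighted stable curves without simultaneously changing the weight data, and you offer no way around this. The paper sidesteps the issue entirely by fixing $(g,n)$ and inducting only on the weight data. The universal curve enters exactly once, on the Deligne--Mumford space, through Knudsen's identification of $\psi_{n}$ with $\omega_{\pi}(\Sigma)$ on $\mathcal{U}_{g,n-1}$, which combined with Keel's nef-and-bigness and Kodaira vanishing yields $H^{j}(\overline{\mathcal{M}}_{g,n},\psi_{n}^{\vee})=0$ for $j>0$ in characteristic zero; this, rather than a Hodge-bundle computation along a forgetful map, is where the characteristic-zero hypothesis is consumed. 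Your instinct that the genus-zero case is characteristic-free because only explicit coherent cohomology along the Kapranov tower is involved is correct, but to turn it into a proof you would still need the two missing propositions above.
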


For a field $K$ of characteristic zero we then apply the deformation theory of varieties with transversal $A_1$ and $\frac{1}{3}(1,1)$ singularities developed in \cite[Sections 5.4, 5.5]{FM} to the study of infinitesimal deformations of the coarse moduli spaces $\overline{M}_{g,A[n]}^K$. In the following statement we summarize the result on deformations of $\overline{M}_{g,A[n]}^K$ in Proposition \ref{ltdhas} and Theorem \ref{rigc}.

\begin{theoremn}
Let $K$ be a field of characteristic zero. If $g+n\geq 4$ then the coarse moduli space $\overline{M}_{g,A[n]}^K$ does not have locally trivial first order infinitesimal deformations for any vector of weights $A[n]$.

If $K$ is an algebraically closed field of characteristic zero and $g+n >4$ then $\overline{M}_{g,A[n]}^K$ is rigid for any vector of weights $A[n]$. 
\end{theoremn}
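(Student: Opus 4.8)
The plan is to deduce both assertions from a single local‑to‑global analysis of the first order deformations of $X:=\overline{M}_{g,A[n]}^K$. Write $\Theta_X=\mathcal{H}om(\Omega_X,\mathcal{O}_X)$ for the tangent sheaf. Following \cite[Sections 5.4, 5.5]{FM}, the space $T^1_X$ of first order infinitesimal deformations of $X$ fits in the exact sequence
\[
0 \to H^1(X,\Theta_X) \to T^1_X \to H^0(X,\mathcal{T}^1_X) \to H^2(X,\Theta_X),
\]
where $\mathcal{T}^1_X$ is the sheaf of first order deformations of the singularities, supported on the singular locus of $X$. Here $H^1(X,\Theta_X)$ is exactly the space of locally trivial deformations, so the first assertion is the vanishing $H^1(X,\Theta_X)=0$, while rigidity requires in addition $H^0(X,\mathcal{T}^1_X)=0$.

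For the first vanishing I would pass to the smooth Hassett stack $\mathcal{X}=\overline{\mathcal{M}}_{g,A[n]}^K$ through its coarse moduli morphism $\pi\colon\mathcal{X}\to X$. Since $X$ has quotient singularities its singular locus has codimension at least two, and $\Theta_X$ and $\pi_*\Theta_{\mathcal{X}}$ are reflexive sheaves agreeing on the smooth locus, hence $\pi_*\Theta_{\mathcal{X}}=\Theta_X$. In characteristic zero the functor $\pi_*$ is exact on coherent sheaves, so $R^q\pi_*\Theta_{\mathcal{X}}=0$ for $q>0$ and the Leray spectral sequence yields $H^1(X,\Theta_X)\cong H^1(\mathcal{X},\Theta_{\mathcal{X}})$. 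As $\mathcal{X}$ is smooth, the right-hand group is the space of first order deformations of the stack, which vanishes for $g\ge 1$ by the rigidity of $\overline{\mathcal{M}}_{g,A[n]}^K$ proved above; for $g=0$ the space $X$ is already smooth and rigid by the genus zero statement. This proves $H^1(X,\Theta_X)=0$ for $g+n\ge 4$, giving the first assertion and the left-hand term of the rigidity.

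The crux, and the step I expect to be the main obstacle, is the vanishing $H^0(X,\mathcal{T}^1_X)=0$. Working over the algebraically closed field $K$, I would describe the singular locus of $X$ as the image under $\pi$ of the locus of weighted stable curves carrying automorphisms acting nontrivially on the deformation space, along whose strata the singularities are transversal $A_1$ and transversal $\frac{1}{3}(1,1)$. By the local computations of \cite[Sections 5.4, 5.5]{FM}, the restriction of $\mathcal{T}^1_X$ to each such stratum is then an explicit coherent sheaf, a line bundle in the transversal $A_1$ case, whose class is expressed through the tautological and boundary classes of the lower-dimensional moduli problem parametrizing the stratum. In this way $H^0(X,\mathcal{T}^1_X)=0$ reduces to the vanishing of the global sections of these sheaves on each stratum.

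Finally I would settle this by a positivity estimate: each stratum is itself the coarse space of a moduli of pointed curves of strictly smaller dimension, and the sheaves computed above are negative multiples of ample-type tautological classes precisely when $g+n>4$, forcing their sections to vanish. The strict inequality is where the estimate just holds: for $g+n=4$ the restricted sheaves may acquire global sections, so the argument then gives only the weaker nonexistence of locally trivial deformations, in keeping with the nonzero deformation spaces of the degenerate low-dimensional Hassett models (such as the Segre cubic threefold) examined later. Substituting $H^1(X,\Theta_X)=0$ and $H^0(X,\mathcal{T}^1_X)=0$ into the exact sequence gives $T^1_X=0$, that is, $\overline{M}_{g,A[n]}^K$ is rigid whenever $g+n>4$.
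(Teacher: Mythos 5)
Your overall skeleton (the local-to-global sequence $0\to H^1(X,T_X)\to T^1_X\to H^0(X,\mathcal{E}xt^1(\Omega_X,\mathcal{O}_X))\to H^2(X,T_X)$, with the locally trivial part handled by the first term and rigidity by additionally killing the third) matches the paper, and your sketch of the second step — stratifying $\Sing(X)$ into transversal $A_1$ and $\frac{1}{3}(1,1)$ loci and killing sections of $\mathcal{E}xt^1$ stratum by stratum via negativity, exactly as in \cite[Sections 5.4, 5.5]{FM} — is essentially what the paper does (it additionally uses that reduction morphisms preserve automorphism groups of the parametrized curves, so that the codimension-two components $Z_4,Z_6,Y,W$ of $\Sing(\overline{M}_{g,n})$ carry over to $\overline{M}_{g,A[n]}$, and it invokes \cite[Lemmas 2.4, 2.5]{Fan} for the higher-codimension components).

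However, your proof of the first assertion has a genuine gap. You claim that $\pi_*T_{\mathcal{X}}$ and $T_X$ are reflexive sheaves agreeing on the smooth locus of $X$, hence equal, so that $H^1(X,T_X)\cong H^1(\mathcal{X},T_{\mathcal{X}})$. For $g\geq 1$ this fails: the branch locus of the coarse map $\pi$ is a \emph{divisor} (e.g.\ the elliptic-tail divisor, whose generic point carries the elliptic involution), and along it the extra automorphism acts as a quasi-reflection on the deformation space. The coarse space is therefore smooth there, yet $(\pi_*T_{\mathcal{X}})$ is the subsheaf of invariant vector fields, which locally looks like $\langle y_1\partial_{y_1},\partial_{y_2},\dots\rangle\subsetneq T_X$. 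Two reflexive sheaves agreeing only outside a divisor need not coincide, so $\pi_*T_{\mathcal{X}}\neq T_X$ and the identification $H^1(X,T_X)\cong H^1(\mathcal{X},T_{\mathcal{X}})$ breaks down. A telltale sign is that the hypothesis $g+n\geq 4$ never enters your argument, whereas it is essential: it is exactly the condition under which Hacking proves $H^1(\overline{M}_{g,n},T_{\overline{M}_{g,n}})=0$ for the \emph{coarse} space (\cite[Theorem 2.3]{Hac}), as opposed to the unconditional stack statement. The paper instead stays on the coarse level throughout: it factors $\rho:\overline{M}_{g,n}\to\overline{M}_{g,A[n]}$ into blow-downs of single boundary divisors, uses the normal bundle computation $N=(\psi_1^\vee)^{\oplus(r-1)}$ and the vanishing $H^1(\psi_1^\vee)=0$ to show $H^1(\overline{M}_{g,n},\rho^*T_{\overline{M}_{g,A[n]}})\cong H^1(\overline{M}_{g,n},T_{\overline{M}_{g,n}})$, and then quotes Hacking's coarse-space vanishing. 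You would need to replace your descent step by an argument of this kind (or otherwise control the discrepancy between $\pi_*T_{\mathcal{X}}$ and $T_X$ along the branch divisor).
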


In Section \ref{Ssegre} we consider a natural variation $\overline{M}_{0,\widetilde{A}[n]}$ on the moduli problem of weighted pointed rational curves, introduced by B. Hassett in \cite[Section 2.1.2]{Ha} by allowing the weights to have sum equal to two.

In particular, we consider Hassett space $\overline{M}_{0,\widetilde{A}[6]}$ with weights $a_1=...=a_6 = \frac{1}{3}$. This space is isomorphic to the Segre cubic, a $3$-fold of degree three in $\mathbb{P}^4$ with ten nodes which carries a very rich projective geometry \cite{Do15}. In Section \ref{Ssegre} we study the infinitesimal deformations of $\overline{M}_{0,\widetilde{A}[6]}$, that is of the Segre cubic.

In Theorem \ref{segre} we prove that $\overline{M}_{0,\widetilde{A}[6]}$ does not have locally trivial deformations, while its family of first order infinitesimal deformations is non-singular of dimension ten and the general deformation is smooth.

Finally, in Section \ref{Saut} we apply the rigidity results in Section \ref{righassett}, and the techniques developed in \cite[Section 1]{FM} to lift automorphisms from zero to positive characteristic, in order to extend the main results on the automorphism groups of Hassett spaces in \cite{MM14}, \cite{MM}, \cite{BM}, \cite{Ma} and \cite{Ma16} over an arbitrary field. 

\subsection*{Acknowledgments}
The authors are members of the Gruppo Nazionale per le Strutture Algebriche, Geometriche e le loro Applicazioni of the Istituto Nazionale di Alta Matematica "F. Severi" (GNSAGA-INDAM).

\section{Preliminaries on Hassett moduli spaces}\label{HK}
Let $S$ be a noetherian scheme and $g,n$ two non-negative integers. A family of nodal curves of genus $g$ with $n$ marked points over $S$ consists of a flat proper morphism $\pi:C\rightarrow S$ whose geometric fibers are nodal connected curves of arithmetic genus $g$, and sections $s_{1},...,s_{n}$ of $\pi$. A collection of input data $(g,A[n]) := (g, a_{1},...,a_{n})$ consists of an integer $g\geq 0$ and the weight data: an element $(a_{1},...,a_{n})\in\mathbb{Q}^{n}$ such that $0<a_{i}\leq 1$ for $i = 1,...,n$, and
$$2g-2 + \sum_{i = 1}^{n}a_{i} > 0.$$
The vector $A[n]$ in the input data $(g,A[n])$ is called an admissible weight data.
\begin{Definition}\label{defha}
A family of nodal curves with marked points $\pi:(C,s_{1},...,s_{n})\rightarrow S$ is stable of type $(g,A[n])$ if
\begin{itemize}
\item[-] the sections $s_{1},...,s_{n}$ lie in the smooth locus of $\pi$, and for any subset $\{s_{i_{1}},..., s_{i_{r}}\}$ with non-empty intersection we have $a_{i_{1}} +...+ a_{i_{r}} \leq 1$,
\item[-] $\omega_{\pi}(\sum_{i=1}^{n}a_{i}s_{i})$ is $\pi$-relatively ample, where $\omega_{\pi}$ is the relative dualizing sheaf.
\end{itemize}
\end{Definition}
By \cite[Theorem 2.1]{Ha} given a collection $(g,A[n])$ of input data, there exists a connected Deligne-Mumford stack $\overline{\mathcal{M}}_{g,A[n]}$, smooth and proper over $\mathbb{Z}$, representing the moduli problem of pointed stable curves of type $(g,A[n])$. The corresponding coarse moduli scheme $\overline{M}_{g,A[n]}$ is projective over $\mathbb{Z}$.

Furthermore, by \cite[Proposition 3.8]{Ha} a weighted pointed stable curve admits no infinitesimal automorphisms, and its infinitesimal deformation space is unobstructed of dimension $3g-3+n$. Then $\overline{\mathcal{M}}_{g,A[n]}$ is a smooth Deligne-Mumford stack of dimension $3g-3+n$.   

For fixed $g,n$, consider two collections of weight data $A[n],B[n]$ such that $a_i\geq b_i$ for any $i = 1,...,n$. Then there exists a birational \textit{reduction morphism}
$$\rho_{B[n],A[n]}:\overline{M}_{g,A[n]}\rightarrow\overline{M}_{g,B[n]}$$
associating to a curve $[C,s_1,...,s_n]\in\overline{M}_{g,A[n]}$ the curve $\rho_{B[n],A[n]}([C,s_1,...,s_n])$ obtained by collapsing components of $C$ along which $\omega_C(b_1s_1+...+b_ns_n)$ fails to be ample, where $\omega_C$ denotes the dualizing sheaf of $C$.\\
Along the paper, when no confusion arises, we will denote reduction morphisms simply by $\rho$ omitting the weight data.

\begin{Remark}\label{n<3}
If $n\leq 2$ the reduction morphism $\rho:\overline{M}_{g,n}\rightarrow\overline{M}_{g,A[n]}$ contracts at most rational tails with two marked points, such rational tails do not have moduli. Therefore $\rho$ is an isomorphism and $\overline{M}_{g,A[n]}\cong \overline{M}_{g,n}$, see also \cite[Corollary 4.7]{Ha}.
\end{Remark}

The boundary of $\overline{M}_{g,A[n]}$, as for $\overline{M}_{g,n}$, has a stratification whose loci, called strata, parametrize curves of a fixed topological type and with a fixed configuration of the marked points.\\
We denote by $D_{I,J}$ the divisor parametrizing curves with two smooth components, of genus zero and $g$ respectively, intersecting in one node, where the points indexed by $I$ and $J$ lie on the genus zero and on the genus $g$ component respectively. 

Note that in $\overline{M}_{g,A[n]}$ may appear boundary divisors parametrizing smooth curves. For instance, as soon as there exist two indices $i,j$ such that $a_i+a_j\leq 1$ we get a boundary divisor whose general point represents a smooth curve where the marked points labeled by $i$ and $j$ collide. 

Finally we recall the notion of $\psi$-classes on $\overline{\mathcal{M}}_{g,A[n]}$. Let $(\pi:\mathcal{C}\rightarrow \overline{\mathcal{M}}_{g,A[n]}, (s_1,...,s_n))$ be the universal family on $\overline{\mathcal{M}}_{g,A[n]}$. The \textit{$\psi$-classes} on $\overline{\mathcal{M}}_{g,A[n]}$ are defined as
$$\psi_i = \pi_{*}(-s_i^{2}) = c_1(\sigma_i^{*}\omega_{\pi})$$
for $i = 1,...,n$.
\subsection{Kapranov's blow-up construction}
In \cite{Ka} M. Kapranov works, for sake of simplicity, on an algebraically closed field of characteristic zero. On the other hand Kapranov's arguments are purely algebraic and his description works over $\mathbb{Z}$.

In \cite{Ka} Kapranov proved that $\overline{M}_{0,n}$ can be constructed as an iterated blow-up $f_n:\overline{M}_{0,n}\rightarrow\mathbb{P}^{n-3}$ induced by $\psi_n$ which is big and globally generated. 
\begin{Construction}\cite{Ka}\label{kblu}
More precisely, fix $(n-1)$-points $p_{1},...,p_{n-1}\in\mathbb{P}^{n-3}$ in linear general position.
\begin{itemize}
\item[(1)] Blow-up the points $p_{1},...,p_{n-2}$, the strict transforms of the lines spanned by two of these $n-2$ points,..., the strict transforms of the linear spaces spanned by the subsets of cardinality $n-4$ of $\{p_{1},...,p_{n-2}\}$.  
\item[(2)] Blow-up $p_{n-1}$, the strict transforms of the lines spanned by pairs of points including $p_{n-1}$ but not $p_{n-2}$,..., the strict transforms of the linear spaces spanned by the subsets of cardinality $(n-4)$ of $\{p_{1},...,p_{n-1}\}$ containing $p_{n-1}$ but not $p_{n-2}$.\\
\vdots
\item[($r$)] Blow-up the strict transforms of the linear spaces spanned by subsets of the form 
$$\{p_{n-1},p_{n-2},...,p_{n-r+1}\}$$ 
so that the order of the blow-ups in compatible by the partial order on the subsets given by inclusion.\\ 
\vdots
\item[($n-3$)] Blow-up the strict transforms of the codimension two linear space spanned by the subset $\{p_{n-1},p_{n-2},...,p_{4}\}$.
\end{itemize}
The composition of these blow-ups is the morphism $f_{n}:\overline{M}_{0,n}\rightarrow\mathbb{P}^{n-3}$ induced by the psi-class $\psi_{n}$.

We denote by $W_{r,s}[n]$, where $s = 1,...,n-r-2$, the variety obtained at the $r$-th step once we finish blowing-up the subspaces spanned by subsets $S$ with $|S|\leq s+r-2$, and by $W_{r}[n]$ the variety produced at the $r$-th step. In particular, $W_{1,1}[n] = \mathbb{P}^{n-3}$ and $W_{n-3}[n] = \overline{M}_{0,n}$.
\end{Construction}
In \cite[Section 6.1]{Ha}, Hassett interprets the intermediate steps of Construction \ref{kblu} as moduli spaces of weighted rational curves. Consider the weight data 
$$A_{r,s}[n]:= (\underbrace{1/(n-r-1),...,1/(n-r-1)}_{(n-r-1)-\rm{times}}, s/(n-r-1), \underbrace{1,...,1}_{r-\rm{times}})$$ 
for $r = 1,...,n-3$ and $s = 1,...,n-r-2$. Then $W_{r,s}[n]\cong\overline{M}_{0,A_{r,s}[n]}$, and the Kapranov's map $f_{n}:\overline{M}_{0,n}\rightarrow\mathbb{P}^{n-3}$ factorizes as a composition of reduction morphisms
$$
\begin{array}{l}
\rho_{A_{r,s-1}[n],A_{r,s}[n]}:\overline{M}_{0,A_{r,s}[n]}\rightarrow\overline{M}_{0,A_{r,s-1}[n]},\: s = 2,...,n-r-2,\\ 
\rho_{A_{r,n-r-2}[n],A_{r+1,1}[n]}:\overline{M}_{0,A_{r+1,1}[n]}\rightarrow\overline{M}_{0,A_{r,n-r-2}[n]}.
\end{array} 
$$
\begin{Remark}\label{LM}
Hassett space $\overline{M}_{0,A_{1,n-3}[n]}$, that is $\mathbb{P}^{n-3}$ blown-up at all the linear spaces of codimension at least two spanned by subsets of $n-2$ points in linear general position, is the Losev-Manin's moduli space $\overline{L}_{n-2}$ introduced by A. Losev and Y. Manin in \cite{LM}, see \cite[Section 6.4]{Ha}. The space $\overline{L}_{n-2}$ parametrizes $(n-2)$-pointed chains of projective lines $(C,x_{0},x_{\infty},x_{1},...,x_{n-2})$ where:
\begin{itemize}
\item[-] $C$ is a chain of smooth rational curves with two fixed points $x_{0},x_{\infty}$ on the extremal components,
\item[-] $x_{1},...,x_{n-2}$ are smooth marked points different from $x_{0},x_{\infty}$ but non necessarily distinct,
\item[-] there is at least one marked point on each component.
\end{itemize}
By \cite[Theorem 2.2]{LM} there exists a smooth, separated, irreducible, proper scheme representing this moduli problem. Note that after the choice of two marked points in $\overline{M}_{0,n}$ playing the role of $x_{0},x_{\infty}$ we get a birational morphism $\overline{M}_{0,n}\rightarrow\overline{L}_{n-2}$ which is nothing but a reduction morphism.

For example, $\overline{L}_{1}$ is a point parametrizing a $\mathbb{P}^{1}$ with two fixed points and a free point, $\overline{L}_{2}\cong\mathbb{P}^{1}$, and $\overline{L}_{3}$ is $\mathbb{P}^{2}$ blown-up at three points in general position, that is a del Pezzo surface of degree six, see \cite[Section 6.4]{Ha} for further generalizations.
\end{Remark}

\subsection{Some notions of deformation theory}
Let us recall some basic notions of deformation theory to which we will constantly refer along the paper.

Let $X$ be a scheme over a field $K$, $A$ an Artinian $K$-algebra with residue field $K$. A deformation $X_A$ of $X$ over $\Spec(A)$ is called {\em trivial} if it is isomorphic to $X\times_K\Spec(A)$; it is {\em locally trivial} if there is an open cover of $X$ by open affines $U$ such that the induced deformation $U_A$ is trivial.

We recall some well-known facts about infinitesimal deformations of normal varieties. By \cite{Ill} the tangent and obstruction spaces to deformations of $X$ are given by $\Ext^1(L_X,\mathcal{O}_X)$ and $\Ext^2(L_X,\mathcal O_X)$ where $L_X$ is the cotangent complex; when $X$ is a normal variety, these spaces are actually $\Ext^1(\Omega_X,\mathcal{O}_X)$ and $\Ext^2(\Omega_X,\mathcal{O}_X)$ respectively. Locally trivial infinitesimal deformations have as tangent and obstruction spaces $H^1(X,T_X)$ and $H^2(X,T_X)$, respectively. 

\begin{Definition} Let $X$ be a scheme over a field. We will say it is {\em rigid} if it has no non-trivial infinitesimal deformations. If $X$ is smooth, this is equivalent to $H^1(X,T_X)=0$, and if $X$ is generically reduced this is equivalent to $\Ext^1(\Omega_X,\mathcal O_X)=0$.
\end{Definition}

By the exact sequence 
$$0\mapsto H^{1}(X,T_{X})\rightarrow\Ext^{1}(\Omega_{X},\mathcal{O}_{X})\rightarrow H^{0}(X,\mathcal{E}xt^{1}(\Omega_{X},\mathcal{O}_{X}))\rightarrow H^{2}(X,T_{X})\to \Ext^2(\Omega_X,\mathcal O_X)$$
induced by the local-to-global spectral sequence for Ext, if $H^0(X,\mathcal{E}xt^1(\Omega_X,\mathcal O_X))=0$ then all deformations are locally trivial, while if $H^1(X,T_X)=0$ then all locally trivial deformations are trivial.

\section{On the rigidity of $\overline{\mathcal{M}}_{g,A[n]}$ and $\overline{M}_{g,A[n]}$}\label{righassett}
Let $\rho:\overline{\mathcal{M}}_{g,A[n]}\rightarrow\overline{\mathcal{M}}_{g,B[n]}$ be a reduction morphism between Hassett moduli stacks. By \cite[Proposition 4.5]{Ha} the morphism $\rho$ contract the boundary divisors $D_{I,J} = \overline{\mathcal{M}}_{0,A_{I}}\times\overline{\mathcal{M}}_{g,A_{J}}$ with $A_{I} = (a_{i_1},...,a_{i_r},1)$, $A_{J} = (a_{j_1},...,a_{j_{n-r}},1)$ and $c = b_{i_1}+...+b_{i_r}\leq 1$ for $2< r\leq n$.\\
By \cite[Remark 4.6]{Ha} the morphism $\rho$ can be factored as a composition of reduction morphisms $\rho = \rho_1\circ...\circ\rho_{k}$ where $\rho_{i}:\overline{\mathcal{M}}_{g,A^{'}[n]}\rightarrow\overline{\mathcal{M}}_{g,B^{'}[n]}$ is the blow-up of $\overline{\mathcal{M}}_{g,B^{'}[n]}$ along the image of a single divisor of type $D_{I,J}$. 
$$
\begin{tikzpicture}[line cap=round,line join=round,>=triangle 45,x=1.0cm,y=0.5263616569431003cm]
\clip(-2.3018779342723006,0.9904801446935761) rectangle (7.20243544600939,6.5);
\draw [shift={(1.52,4.49)}] plot[domain=0.7998899024629484:3.9414825560527413,variable=\t]({1.*1.4638647478507023*cos(\t r)+0.*1.4638647478507023*sin(\t r)},{0.*1.4638647478507023*cos(\t r)+1.*1.4638647478507023*sin(\t r)});
\draw [shift={(-0.41,2.67)}] plot[domain=-2.439335722080786:0.702256931509007,variable=\t]({1.*1.1920570456148483*cos(\t r)+0.*1.1920570456148483*sin(\t r)},{0.*1.1920570456148483*cos(\t r)+1.*1.1920570456148483*sin(\t r)});
\draw (-2.2,5.66)-- (0.66,1.36);
\draw [->] (2.30714,3.58) -- (4.256009389671363,3.581968267236119);
\draw [shift={(6.072629107981221,4.5167109775608445)}] plot[domain=0.7998899024629496:3.9414825560527427,variable=\t]({1.*1.4638647478507019*cos(\t r)+0.*1.4638647478507019*sin(\t r)},{0.*1.4638647478507019*cos(\t r)+1.*1.4638647478507019*sin(\t r)});
\draw [shift={(4.1426291079812225,2.696710977560843)}] plot[domain=-2.439335722080786:0.702256931509007,variable=\t]({1.*1.1920570456148483*cos(\t r)+0.*1.1920570456148483*sin(\t r)},{0.*1.1920570456148483*cos(\t r)+1.*1.1920570456148483*sin(\t r)});
\begin{scriptsize}
\draw [fill=black] (-1.9480760116387195,5.281233164351921) circle (1.0pt);
\draw[color=black] (-1.8528902582159628,5.690816489263434) node {$a_{i_1}$};
\draw [fill=black] (-0.5288427272999972,3.147420883702793) circle (1.0pt);
\draw[color=black] (-0.4392458920187796,3.556755735119648) node {$a_{i_r}$};
\draw [fill=black] (0.7633074270928902,2.880593640760262) circle (1.0pt);
\draw[color=black] (0.8588174882629106,3.2866214624432195) node {$a_{j_1}$};
\draw [fill=black] (2.274104548470395,5.744681764423259) circle (1.0pt);
\draw[color=black] (2.405824530516432,6.150044752813363) node {$a_{j_n-r}$};
\draw[color=black] (3.299354460093897,3.921437003232827) node {$\rho_{i}$};
\draw [fill=black] (4.90150401409298,1.7774129427136711) circle (1.0pt);
\draw[color=black] (4.961942488262911,2.1115373763007548) node {$c$};
\draw [fill=black] (5.250505662646654,3.1367218179075924) circle (1.0pt);
\draw[color=black] (5.30868544600939,3.462208739682898) node {$b_{j_1}$};
\draw [fill=black] (6.801653410541645,5.786129571357346) circle (1.0pt);
\draw[color=black] (6.864583333333334,6.136538039179541) node {$b_{j_{n-r}}$};
\end{scriptsize}
\end{tikzpicture}
$$
We will need the following commutative algebra result.

\begin{Lemma}\label{ca}
Let $R$ be a commutative ring. Given the following commutative diagram of $R$-modules
 \[
  \begin{tikzpicture}[xscale=1.5,yscale=-1.2]
    \node (A0_2) at (2, 0) {$0$};
    \node (A0_3) at (3, 0) {$0$};
    \node (A1_2) at (2, 1) {$C_1$};
    \node (A1_3) at (3, 1) {$C_2$};
        \node (A2_0) at (0, 2) {$0$};
    \node (A2_1) at (1, 2) {$A$};
    \node (A2_2) at (2, 2) {$A_1$};
    \node (A2_3) at (3, 2) {$A_2$};
    \node (A2_4) at (4, 2) {$B$};
    \node (A2_5) at (5, 2) {$0$};
    \node (A3_0) at (0, 3) {$0$};
    \node (A3_1) at (1, 3) {$A$};
    \node (A3_2) at (2, 3) {$B_1$};
    \node (A3_3) at (3, 3) {$B_2$};
    \node (A3_4) at (4, 3) {$B$};
    \node (A3_5) at (5, 3) {$0$};
    \node (A4_2) at (2, 4) {$0$};
    \node (A4_3) at (3, 4) {$0$};
    \path (A3_2) edge [->] node [auto] {$\scriptstyle{}$} (A2_2);
    \path (A2_3) edge [->] node [auto] {$\scriptstyle{}$} (A2_4);
    \path (A3_4) edge [->] node [auto] {$\scriptstyle{}$} (A3_5);
    \path (A2_3) edge [->,swap] node [auto] {$\scriptstyle{\pi_2}$} (A1_3);
    \path (A2_4) edge [->] node [auto] {$\scriptstyle{}$} (A2_5);
    \path (A3_4) edge [-,double distance=1.5pt] node [auto]
{$\scriptstyle{}$} (A2_4);
    \path (A2_1) edge [->] node [auto] {$\scriptstyle{}$} (A2_2);
    \path (A3_0) edge [->] node [auto] {$\scriptstyle{}$} (A3_1);
    \path (A4_3) edge [->] node [auto] {$\scriptstyle{}$} (A3_3);
    \path (A2_2) edge [->] node [auto] {$\scriptstyle{\gamma}$} (A2_3);
    \path (A1_3) edge [->] node [auto] {$\scriptstyle{}$} (A0_3);
    \path (A3_3) edge [->] node [auto] {$\scriptstyle{}$} (A2_3);
    \path (A2_2) edge [->] node [auto] {$\scriptstyle{\pi_1}$} (A1_2);
    \path (A3_1) edge [-,double distance=1.5pt] node [auto]
{$\scriptstyle{}$} (A2_1);
    \path (A4_2) edge [->] node [auto] {$\scriptstyle{}$} (A3_2);
    \path (A3_2) edge [->] node [auto] {$\scriptstyle{}$} (A3_3);
    \path (A1_2) edge [->] node [auto] {$\scriptstyle{}$} (A0_2);
    \path (A2_0) edge [->] node [auto] {$\scriptstyle{}$} (A2_1);
    \path (A3_1) edge [->] node [auto] {$\scriptstyle{}$} (A3_2);
    \path (A3_3) edge [->] node [auto] {$\scriptstyle{}$} (A3_4);
  \end{tikzpicture}
  \]
there exists an isomorphism $\delta:C_{1}\rightarrow C_{2}$.
\end{Lemma}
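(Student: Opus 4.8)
The plan is to read the structure off the diagram and then realize the isomorphism as the map induced on cokernels by the middle horizontal arrow $\gamma$. The two horizontal rows are the four-term exact sequences
\[
0\to A\to B_1\to B_2\to B\to 0,
\qquad
0\to A\to A_1\xrightarrow{\gamma}A_2\to B\to 0,
\]
sharing the same outer terms $A$ and $B$ and connected by a morphism of complexes which is the identity on $A$ and on $B$ and is given by vertical maps $v_1\colon B_1\to A_1$, $v_2\colon B_2\to A_2$ in the middle. The two vertical columns through $C_1$ and $C_2$ are the short exact sequences $0\to B_1\xrightarrow{v_1}A_1\xrightarrow{\pi_1}C_1\to 0$ and $0\to B_2\xrightarrow{v_2}A_2\xrightarrow{\pi_2}C_2\to 0$; in particular $v_1,v_2$ are injective and $C_1=\operatorname{coker}v_1$, $C_2=\operatorname{coker}v_2$.

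Next I would build the map. Commutativity of the middle square gives $\gamma\circ v_1=v_2\circ(B_1\to B_2)$, so $\gamma(\im v_1)\subseteq\im v_2$, and hence $\gamma$ descends to a homomorphism $\delta\colon C_1\to C_2$. It then remains to show that $\delta$ is bijective by a diagram chase. For surjectivity, given $w\in A_2$ I push $w$ to $B$, lift along the surjection $B_2\to B$ to some $v$, and use the right square to see that $w-v_2(v)$ lies in $\ker(A_2\to B)=\im\gamma$; thus $w$ is congruent modulo $\im v_2$ to an element of $\im\gamma$, so $\overline{w}\in\im\delta$. For injectivity I take $x\in A_1$ with $\gamma(x)\in\im v_2$, write $\gamma(x)=v_2(y)$, and apply $A_2\to B$ to force $y\in\ker(B_2\to B)=\im(B_1\to B_2)$; writing $y$ as an image and using the middle square reduces this to $\gamma(x)=\gamma(v_1(b))$, so $x-v_1(b)\in\ker\gamma=\im(A\to A_1)$, and finally the left square (together with the identification of the two copies of $A$) shows $x\in\im v_1$, i.e. $\overline{x}=0$.

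Alternatively, and perhaps more cleanly, I would factor each row through the image of its middle arrow. Writing $Z=\im\gamma\subseteq A_2$ and $Z'=\im(B_1\to B_2)\subseteq B_2$, each four-term row splits into two short exact sequences, namely $0\to A\to A_1\to Z\to 0$ and $0\to Z\to A_2\to B\to 0$ for the bottom row, and $0\to A\to B_1\to Z'\to 0$ and $0\to Z'\to B_2\to B\to 0$ for the top one; the vertical maps induce a map $Z'\to Z$ turning both pairs into ladders. Applying the snake lemma to the ladder whose left vertical map is $\mathrm{id}_A$ identifies $C_1=\operatorname{coker}v_1$ with $\operatorname{coker}(Z'\to Z)$ and shows $Z'\to Z$ is injective; applying it to the ladder whose right vertical map is $\mathrm{id}_B$ identifies $\operatorname{coker}(Z'\to Z)$ with $\operatorname{coker}v_2=C_2$. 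Composing the two isomorphisms yields $C_1\cong C_2$.

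I expect the only genuine obstacle to be the injectivity step of the direct chase: it is the single place where exactness of the bottom row at $A_1$, exactness of the top row at $B_2$, and commutativity of both the left and right squares must all be combined, and one has to use the identification of the two copies of $A$ correctly. The snake-lemma route avoids this by repackaging everything into two applications of a standard lemma, at the cost of first verifying that the induced map $Z'\to Z$ on images is well defined.
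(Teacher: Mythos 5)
Your proposal is correct and follows the paper's own route: the paper defines $\delta$ by exactly the same formula $\delta(\pi_1(a_1))=\pi_2(\gamma(a_1))$, i.e.\ as the map induced by $\gamma$ on the cokernels of the vertical maps $B_i\to A_i$, and then declares well-definedness and bijectivity ``straightforward to check.'' Your diagram chase supplies precisely that omitted check (and your snake-lemma repackaging is a valid alternative proof of the same fact), so nothing is missing.
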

\begin{proof}
For any $c_1\in C_1$ there exists $a_{1}\in A_{1}$ such that $c_1 = \pi_1(a_1)$. We define $\delta(c_1) = \pi_2(\gamma(a_1))$. It is straightforward to check that $\delta$ is well defined and that it is an isomorphism.
\end{proof}

Now, we are ready to explicit the normal bundle of $\overline{\mathcal{M}}_{g,C[s+1]}\subset\overline{\mathcal{M}}_{g,B[n]}$ in terms of the first psi-class.

\begin{Proposition}\label{normalbun}
Let $\rho:\overline{\mathcal{M}}_{g,A[n]}\rightarrow\overline{\mathcal{M}}_{g,B[n]}$ be a reduction morphism contracting a single boundary divisor $D_{I,J}$ as above, and let $\rho(D_{I,J}) = \overline{\mathcal{M}}_{g,C[s+1]}$ be its image, with $s = n-r$. Then 
$$N_{\overline{\mathcal{M}}_{g,C[s+1]}/\overline{\mathcal{M}}_{g,B[n]}} = (\psi_{1}^{\vee})^{\oplus (r-1)}$$
\end{Proposition}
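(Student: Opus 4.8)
The plan is to exploit the factorization recalled just above, in which $\rho$ decomposes into elementary reduction morphisms, each of which is a blow-up. Write $X=\overline{\mathcal{M}}_{g,B[n]}$, $X'=\overline{\mathcal{M}}_{g,A[n]}$ and $Z=\rho(D_{I,J})=\overline{\mathcal{M}}_{g,C[s+1]}$. By \cite[Remark 4.6]{Ha} we may assume $\rho$ is a single elementary reduction, so that $\rho:X'\to X$ is the blow-up of the smooth substack $Z$ with exceptional divisor $E=D_{I,J}=\overline{\mathcal{M}}_{0,A_{I}}\times\overline{\mathcal{M}}_{g,A_{J}}$, by \cite[Proposition 4.5]{Ha}. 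Since $Z$ is smooth, $E=\mathbb{P}(N_{Z/X})$ and the projection $p:E\to Z$ has fibres $\mathbb{P}^{\,\mathrm{codim}_X Z-1}$; comparing dimensions gives $\mathrm{codim}_X Z=(3g-3+n)-(3g-2+s)=r-1$, so $N:=N_{Z/X}$ has rank $r-1$ and the fibres of $p$ are $\mathbb{P}^{r-2}$. In particular $\overline{\mathcal{M}}_{0,A_{I}}\cong\mathbb{P}^{r-2}$, and since $E$ is literally the product $\overline{\mathcal{M}}_{0,A_{I}}\times Z$, the projective bundle $\mathbb{P}(N)$ is trivial over $Z$. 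As two vector bundles have isomorphic projectivisations exactly when they differ by a line bundle twist, we get $N\cong L^{\oplus(r-1)}$ for some line bundle $L$ on $Z$, and it remains only to identify $L$.

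To pin down $L$ I would compute the normal bundle of $E$ in $X'$ in two ways. On one hand, the tautological identity for a blow-up gives $\mathcal{O}_{E}(E)=\mathcal{O}_{\mathbb{P}(N)}(-1)$, and for the trivial bundle $\mathbb{P}(L^{\oplus(r-1)})\cong Z\times\mathbb{P}^{r-2}$ the tautological subsheaf is $\mathcal{O}_{\mathbb{P}^{r-2}}(-1)\boxtimes L$. On the other hand $E=D_{I,J}$ is a boundary divisor of $X'$, and its normal bundle is given by the usual node-smoothing formula $N_{E/X'}=\psi_{\bullet}^{\vee}\boxtimes\psi_{\star}^{\vee}$, the external tensor product of the duals of the $\psi$-classes at the two branches of the node, where $\psi_{\bullet}$ sits on $\overline{\mathcal{M}}_{0,A_{I}}$ and $\psi_{\star}$ on $\overline{\mathcal{M}}_{g,A_{J}}$. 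This formula is exactly what Lemma \ref{ca} is designed to produce: applying it to the diagram comparing the deformation (local-to-global $\Ext$) sequences of the nodal curve and of its two marked components isolates the node-smoothing parameter as the tensor product of the tangent lines at the node, i.e.\ as $\psi_{\bullet}^{\vee}\boxtimes\psi_{\star}^{\vee}$.

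It then remains to match the two descriptions. Restricting $N_{E/X'}$ to a fibre $\mathbb{P}^{r-2}$ of $p$ gives $\psi_{\bullet}^{\vee}$, and the blow-up comparison forces $\psi_{\bullet}^{\vee}=\mathcal{O}_{\mathbb{P}^{r-2}}(-1)$, i.e.\ $\psi_{\bullet}$ is the hyperplane class of $\overline{\mathcal{M}}_{0,A_{I}}\cong\mathbb{P}^{r-2}$, in agreement with Kapranov's description. Restricting instead along a constant section $\sigma:Z\to Z\times\{pt\}\subset E$ of $p$ kills the $\overline{\mathcal{M}}_{0,A_{I}}$ factor and yields $L=\sigma^{*}N_{E/X'}=\psi_{\star}^{\vee}$. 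Finally, lowering the weight of the node from $1$ in $A_{J}$ to $c$ in $C[s+1]$ induces an isomorphism $\overline{\mathcal{M}}_{g,A_{J}}\cong Z$ carrying $\psi_{\star}$ to the first $\psi$-class $\psi_{1}$ of $Z$, the marking attached at the contracted tail being listed first in $C[s+1]$. Hence $L=\psi_{1}^{\vee}$ and $N_{Z/X}\cong(\psi_{1}^{\vee})^{\oplus(r-1)}$, as claimed; all ingredients are available over $\mathbb{Z}$, so the argument is characteristic-free.

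The main obstacle is the middle step: establishing the node-smoothing formula $N_{E/X'}=\psi_{\bullet}^{\vee}\boxtimes\psi_{\star}^{\vee}$ in the weighted, stacky setting and over an arbitrary base, together with the clean bookkeeping of conventions (the sign in $\mathcal{O}_{\mathbb{P}(N)}(-1)$, the Kapranov identification $\psi_{\bullet}=\mathcal{O}(1)$, and the reduction isomorphism $\overline{\mathcal{M}}_{g,A_{J}}\cong Z$ carrying $\psi_{\star}$ to $\psi_{1}$). Everything else is formal once Lemma \ref{ca} delivers the normal bundle of the boundary divisor as the advertised tensor product.
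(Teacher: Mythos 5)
Your argument is correct, but it runs in the opposite direction from the paper's. You reconstruct $N_{Z/X}$ (where $Z=\rho(D_{I,J})$, $X=\overline{\mathcal{M}}_{g,B[n]}$) from data living upstairs on the blow-up: the product structure $E=D_{I,J}=\overline{\mathcal{M}}_{0,A_I}\times\overline{\mathcal{M}}_{g,A_J}$ forces $\mathbb{P}(N_{Z/X})$ to be trivial over $Z$, hence $N_{Z/X}\cong L^{\oplus(r-1)}$, and you then identify $L$ by matching $\mathcal{O}_E(E)=\mathcal{O}_{\mathbb{P}(N)}(-1)$ against the node-smoothing formula $N_{E/X'}=\psi_{\bullet}^{\vee}\boxtimes\psi_{\star}^{\vee}$ along fibres and sections of $E\to Z$. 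The paper never touches $N_{E/X'}$ or the projective-bundle structure: it computes $N_{Z/X}$ directly and fibrewise, using the embedding $[C,x_1,\dots,x_{s+1}]\mapsto[C,x_1,\dots,x_1,\dots,x_{s+1}]$ that realizes $Z$ as the locus in $X$ where the $r$ marked points indexed by $I$ collide. Dualizing $0\to\Omega_C\to\Omega_C(\sum x_i)\to\bigoplus\mathcal{O}_{C,x_i}\to 0$ for the $(s+1)$-pointed and the $n$-pointed curve and comparing the two four-term exact sequences via Lemma \ref{ca}, the authors get the fibre of the normal bundle as $(T_{x_1}C)^{\oplus r}/T_{x_1}C\cong(T_{x_1}C)^{\oplus(r-1)}$, i.e.\ $(\psi_1^{\vee})^{\oplus(r-1)}$ in one stroke. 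So Lemma \ref{ca} is not, as you guessed, aimed at the boundary-divisor normal bundle; it is aimed at $N_{Z/X}$ itself. The trade-off: your route needs the node-smoothing formula as an extra input in the weighted, stacky, arbitrary-base setting (true and standard, and your local-to-global $\Ext$ sketch is the right way to prove it, but it is precisely the kind of statement the paper's direct computation sidesteps), plus some care with conventions on $\mathcal{O}_{\mathbb{P}(N)}(-1)$ and the identification $\overline{\mathcal{M}}_{g,A_J}\cong Z$ carrying $\psi_{\star}$ to $\psi_1$; in exchange it gives a cleaner global geometric picture and explains \emph{why} the answer is a direct sum of copies of a single line bundle, which in the paper's pointwise computation appears only after the diagonal quotient $(T_{x_1}C)^{\oplus r}/T_{x_1}C$ is unwound.
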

\begin{proof}
The reduction morphism $\rho:\overline{\mathcal{M}}_{g,A[n]}\rightarrow\overline{\mathcal{M}}_{g,B[n]}$ is the blow-up of $\overline{\mathcal{M}}_{g,B[n]}$ along $\rho(D_{I,J}) = \overline{\mathcal{M}}_{g,C[s+1]}$ with $C = (c,b_{j_1},...,b_{j_{n-r}})$. We identify $\overline{\mathcal{M}}_{g,C[s+1]}$ with the image of the embedding 
$$
\begin{array}{cccc}
 & \overline{\mathcal{M}}_{g,C[s+1]} & \longrightarrow & \overline{\mathcal{M}}_{g,B[n]}\\
 & [C,x_1,...,x_{s+1}] & \longmapsto & [C,x_1,...,x_1,...,x_{s+1}]
\end{array}
$$
Let $[C,x_1,...,x_1,...,x_{s+1}]\in\overline{\mathcal{M}}_{g,C[s+1]}\subset\overline{\mathcal{M}}_{g,B[n]}$ be a point. On the curve $C$ we have the exact sequence
\begin{center}
$0\mapsto\Omega_{C}\rightarrow\Omega_{C}(\sum_{i=1}^{n}x_{i})\rightarrow\bigoplus_{i=1}^{n}\mathcal{O}_{C,x_{i}}\mapsto 0$
\end{center}
Now, since $\Hom(\Omega_{C}(\sum_{i=1}^{n}x_{i}),\mathcal{O}_{C}) = T_{Id}\Aut(C,(x_{1},...,x_n))$ and $(C,(x_{1},...,x_n))$ is stable we have that 
$$\Hom(\Omega_{C}(x_1+...+x_n),\mathcal{O}_{C}) = 0$$ 

Therefore, applying the functor $\mathcal{H}om(-,\mathcal{O}_{C})$ and taking stalks at the point $x_1\in C$ we get the following 
exact sequence
$$0\mapsto \Hom(\Omega_C,\mathcal{O}_{C})\rightarrow\bigoplus_{i=1}^{n}\Ext^{1}(\mathcal{O}_{C,x_{i}},\mathcal{O}_{C})\rightarrow\Ext^{1}(\Omega_{C}(\sum_{i=1}^{n}x_{i}),\mathcal{O}_{C}))\rightarrow\Ext^{1}(\Omega_{C},\mathcal{O}_{C})\mapsto 0$$

On the other hand we have the same exact sequence on $[C,x_1,...,x_1,...,x_{s+1}]$ seen as a point in $\overline{\mathcal{M}}_{g,C[s+1]}$. Therefore we may consider the following diagram
 \[
 \begin{tikzpicture}[xscale=4.4,yscale=-1.2]
  \node (A0_1) at (1, 0) {$(T_{x_1}C)^{\otimes r}/T_{x_{1}}C$};
 \node (A0_2) at (2, 0)
{$N_{\overline{M}_{0,C[s+1]}/\overline{M}_{0,B[n]}|[C,x_{i}]}$};
    \node (A1_0) at (0, 1) {$\Hom(\Omega_C,\mathcal{O}_{C})$};
    \node (A1_1) at (1, 1)
{$\bigoplus_{i=1}^{n}\Ext^{1}(\mathcal{O}_{C,x_{i}},\mathcal{O}_{C})$};
    \node (A1_2) at (2, 1)
{$\Ext^{1}(\Omega_{C}(\sum_{i=1}^{n}x_{i}),\mathcal{O}_{C}))$};
    \node (A1_3) at (3, 1) {$\Ext^{1}(\Omega_{C},\mathcal{O}_{C})$};
    \node (A2_0) at (0, 2) {$\Hom(\Omega_C,\mathcal{O}_{C})$};
    \node (A2_1) at (1, 2)
{$\bigoplus_{i=1}^{s+1}\Ext^{1}(\mathcal{O}_{C,x_{i}},\mathcal{O}_{C})$};
    \node (A2_2) at (2, 2)
{$\Ext^{1}(\Omega_{C}(\sum_{i=1}^{s+1}x_{i}),\mathcal{O}_{C}))$};
    \node (A2_3) at (3, 2) {$\Ext^{1}(\Omega_{C},\mathcal{O}_{C})$};
    \path (A2_1) edge [->] node [auto] {$\scriptstyle{}$} (A2_2);
    \path (A1_0) edge [->] node [auto] {$\scriptstyle{}$} (A1_1);
    \path (A1_3) edge [-,double distance=1.5pt] node [auto]
{$\scriptstyle{}$} (A2_3);
    \path (A0_1) edge [->] node [auto] {$\scriptstyle{}$} node
[rotate=180,sloped] {$\scriptstyle{\widetilde{\ \ \ }}$} (A0_2);
    \path (A1_1) edge [->] node [auto] {$\scriptstyle{}$} (A1_2);
    \path (A2_2) edge [->] node [auto] {$\scriptstyle{}$} (A2_3);
    \path (A1_1) edge [->] node [auto] {$\scriptstyle{}$} (A0_1);
    \path (A2_2) edge [->] node [auto] {$\scriptstyle{\beta}$} (A1_2);
    \path (A2_1) edge [->] node [auto] {$\scriptstyle{\alpha}$} (A1_1);
    \path (A1_0) edge [-,double distance=1.5pt] node [auto]
{$\scriptstyle{}$} (A2_0);
    \path (A1_2) edge [->] node [auto] {$\scriptstyle{}$} (A1_3);
    \path (A1_2) edge [->] node [auto] {$\scriptstyle{}$} (A0_2);
    \path (A2_0) edge [->] node [auto] {$\scriptstyle{}$} (A2_1);
  \end{tikzpicture}
  \]
where the vertical maps are defined as
$$
\begin{array}{cccc}
\alpha: & \bigoplus_{i=1}^{s+1}\Ext^{1}(\mathcal{O}_{C,x_{i}},\mathcal{O}_{C}) & \longrightarrow &\bigoplus_{i=1}^{n}\Ext^{1}(\mathcal{O}_{C,x_{i}},\mathcal{O}_{C})\\
 & (v_1,...,v_{s+1}) & \longmapsto & (v_1,...,v_1,...,v_{s+1})
\end{array}
$$
and 
$$
\begin{array}{cccc}
\beta: & T^{1}Def(C,x_1,...,x_{s+1}) & \longrightarrow & T^{1}Def(C,x_1,...,x_1,...,x_{s+1})\\
 & (\widetilde{C},\widetilde{x}_1,...,\widetilde{x}_{s+1}) & \longmapsto & (\widetilde{C},\widetilde{x}_1,...,\widetilde{x}_1,...,\widetilde{x}_{s+1})
\end{array}
$$
with the identifications $T^{1}Def(C,x_1,...,x_{s+1}) = \Ext^{1}(\Omega_{C}(\sum_{i=1}^{s+1}x_{i}),\mathcal{O}_{C})) = T_{[C,x_i]}\overline{\mathcal{M}}_{g,C[s+1]}$ and $T^{1}Def(C,x_1,...,x_1,...,x_{s+1}) = \Ext^{1}(\Omega_{C}(\sum_{i=1}^{n}x_{i}),\mathcal{O}_{C})) = T_{[C,x_i]}\overline{\mathcal{M}}_{g,B[n]}$.
Furthermore, we have that $\Ext^{1}(\mathcal{O}_{C,x_{i}},\mathcal{O}_{C}) = T_{x_{i}}C$. Hence 
$$\bigoplus_{i=1}^{n}\Ext^{1}(\mathcal{O}_{C,x_{i}},\mathcal{O}_{C})/\bigoplus_{i=1}^{s+1}\Ext^{1}(\mathcal{O}_{C,x_{i}},\mathcal{O}_{C}) = (T_{x_1}C)^{\oplus r}/T_{x_1}C = (T_{x_1}C)^{\oplus (r-1)}$$
By Lemma \ref{ca} we get $N_{\overline{\mathcal{M}}_{g,C[s+1]}/\overline{\mathcal{M}}_{g,B[n]}|[C,x_{i}]} \cong (T_{x_1}C)^{\oplus (r-1)}$, and hence 
$$N_{\overline{\mathcal{M}}_{g,C[s+1]}/\overline{\mathcal{M}}_{g,B[n]}} = (\psi_{1}^{\vee})^{\oplus (r-1)}$$ 
Note that $\codim_{\overline{\mathcal{M}}_{g,B[n]}}(\overline{\mathcal{M}}_{g,C[s+1]}) = n-3-(n-r-2) = r-1$.
\end{proof}
 
Our next aim is to prove a vanishing result for the higher cohomology groups of $\psi$-classes on Hassett space.

\begin{Proposition}\label{psihassett}
Let $\rho:\overline{\mathcal{M}}_{g,A[n]}\rightarrow\overline{\mathcal{M}}_{g,B[n]}$ be a reduction morphism contracting a single boundary divisor $D_{I,J}$ as above, and let $\rho(D_{I,J}) = \overline{\mathcal{M}}_{g,C[s+1]}$ be its image, with $s = n-r$. Assume $H^{j}(\overline{\mathcal{M}}_{g,C[m]},\psi_{i}^{\vee}) = 0$ for any $i=1,...,m$, $j>0$, for any Hassett stack $\overline{\mathcal{M}}_{g,C[m]}$ with $m<n$.
Then 
$$H^{j}(\overline{\mathcal{M}}_{g,A[n]},\psi_{i}^{\vee}) = 0 \Longrightarrow H^{j}(\overline{\mathcal{M}}_{g,B[n]},\psi_{i}^{\vee}) = 0$$
for any $j>0$.
\end{Proposition}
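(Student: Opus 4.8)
The plan is to exploit that, by the proof of Proposition \ref{normalbun}, the morphism $\rho$ is the blow-up $\pi:\widetilde{X}:=\overline{\mathcal{M}}_{g,A[n]}\to X:=\overline{\mathcal{M}}_{g,B[n]}$ of the smooth substack $Z:=\overline{\mathcal{M}}_{g,C[s+1]}$, with exceptional divisor $E=D_{I,J}$ of codimension $r-1$. First I would reduce the problem from $X$ to $\widetilde{X}$. Since $X$ is a smooth Deligne--Mumford stack and $Z$ is smooth, the blow-up satisfies $\pi_*\mathcal{O}_{\widetilde{X}}=\mathcal{O}_X$ and $R^q\pi_*\mathcal{O}_{\widetilde{X}}=0$ for $q>0$; hence by the projection formula $R\pi_*(\pi^*\psi_i^{\vee})=\psi_i^{\vee}$, and the Leray spectral sequence gives $H^j(X,\psi_i^{\vee})\cong H^j(\widetilde{X},\pi^*\psi_i^{\vee})$ for every $j$. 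Thus it suffices to prove $H^j(\widetilde{X},\pi^*\psi_i^{\vee})=0$ for $j>0$, starting from the hypothesis $H^j(\widetilde{X},\psi_i^{\vee})=0$ on $\widetilde{X}$.

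The next step is to compare $\pi^*\psi_i^B$ with $\psi_i^A$. For a marking $i\in J$, i.e. one supported on the genus $g$ component that is \emph{not} affected by the contraction, one checks that $\pi^*\psi_i^B=\psi_i^A$, so the conclusion for such $i$ is immediate from the hypothesis and the reduction of the previous paragraph. For a marking $i\in I$ lying on the contracted rational tail, $\pi^*\psi_i^B$ and $\psi_i^A$ differ by a multiple of $E$; the coefficient is read off by intersecting with a line in a fibre of $E\to Z$ and equals the degree of $\psi_i$ restricted to the tail factor. This reduces the case $i\in I$ to controlling how cohomology changes under twisting by $E$.

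For this I would filter by the exceptional divisor, using the short exact sequences $0\to\mathcal{F}(-E)\to\mathcal{F}\to\mathcal{F}|_E\to 0$ with $\mathcal{F}$ the successive twists of $\psi_i^{A,\vee}$ by $\mathcal{O}(-E)$. Starting from $H^{>0}(\widetilde{X},\psi_i^{A,\vee})=0$, climbing this filtration reduces everything to the \emph{total} acyclicity of the restrictions $\psi_i^{A,\vee}(-kE)|_E$. Here the key input is Proposition \ref{normalbun}: because the normal bundle is $(\psi_1^\vee)^{\oplus(r-1)}$, a sum of copies of one line bundle, the projectivisation is a \emph{trivial} projective bundle, $E\cong Z\times\mathbb{P}^{r-2}$, and $\mathcal{O}(E)|_E$ restricts to $\mathcal{O}_{\mathbb{P}^{r-2}}(-1)$ twisted by a power of $\psi_1$ pulled back from $Z$. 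Applying the Künneth formula, each restriction splits as an external product of a line bundle $\mathcal{O}_{\mathbb{P}^{r-2}}(m)$ and a (power of a) dual $\psi$-class on $Z$. The $\mathbb{P}^{r-2}$-factor is acyclic precisely when $-(r-2)\le m\le -1$, which is exactly the range produced by the twists since $\codim_X Z=r-1$ matches the fibre dimension $r-2$; this is the classical mechanism behind the vanishing of higher direct images for a blow-up. Where the fibre factor fails to be acyclic, the complementary factor on $Z$ is a dual $\psi$-class, whose higher cohomology vanishes by the inductive hypothesis applied to the lower Hassett stack $Z=\overline{\mathcal{M}}_{g,C[s+1]}$ (legitimate since $s+1=n-r+1<n$).

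The main obstacle, and the step requiring the most care, is the bookkeeping in the last paragraph: pinning down the exact comparison coefficient between $\pi^*\psi_i^B$ and $\psi_i^A$ for $i\in I$, identifying $\psi_i^{A,\vee}(-kE)|_E$ as a precise external product under the isomorphism $E\cong Z\times\mathbb{P}^{r-2}$, and checking that the resulting twists on the $\mathbb{P}^{r-2}$-factor land in the acyclic range $\{-(r-2),\dots,-1\}$ so that the two vanishing mechanisms — fibrewise acyclicity on $\mathbb{P}^{r-2}$ and the inductive vanishing of dual $\psi$-classes on $Z$ — together kill all of $H^{>0}(E,\cdot)$. Once this is in place the filtration argument closes and yields $H^j(\widetilde{X},\pi^*\psi_i^{\vee})=0$, hence $H^j(\overline{\mathcal{M}}_{g,B[n]},\psi_i^{\vee})=0$, for all $j>0$.
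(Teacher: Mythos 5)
Your overall architecture is the same as the paper's: pass from $\overline{\mathcal{M}}_{g,B[n]}$ to the blow-up via $R^j\rho_*\mathcal{O}=0$ and Leray; split into markings off the contracted tail, where $\rho^*\psi_i^\vee\cong\psi_i^\vee$ and the hypothesis on $\overline{\mathcal{M}}_{g,A[n]}$ finishes immediately; and markings on the tail, where one compares $\rho^*\psi_i^{B,\vee}$ with $\psi_i^{A,\vee}$ across the exceptional divisor and feeds the lower-dimensional stack $Z=\overline{\mathcal{M}}_{g,C[s+1]}$ into the inductive hypothesis. All of that matches.

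The gap sits exactly where you defer to ``bookkeeping,'' and that bookkeeping is the substantive content of the case $i\in I$. The comparison is $\psi_i^A=\rho^*\psi_i^B+D_{I,J}$ with coefficient exactly $1$, not ``the degree of $\psi_i$ restricted to the tail''; hence $\rho^*\psi_i^{B,\vee}=\psi_i^{A,\vee}(E)$ and there is a \emph{single} step, not a filtration. Note moreover that your filtration runs in the wrong direction: you twist down by $\mathcal{O}(-E)$ and therefore are forced to ask for \emph{total} acyclicity of the restrictions (to kill the connecting maps out of $H^0$), whereas the bundle actually needed is the \emph{up}-twist, for which the long exact sequence of
$0\to\psi_{i}^{A,\vee}\to\rho^{*}\psi_{i}^{B,\vee}\to\rho^{*}\psi_{i}^{B,\vee}|_{D_{I,J}}\to 0$
only requires $H^{>0}$ of the restricted bundle to vanish. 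Finally, the restriction $\rho^*\psi_i^{B}|_{D_{I,J}}$ is not a general external product to be analyzed by a K\"unneth range on the $\mathbb{P}^{r-2}$ factor: it is pulled back from $Z$ along the contraction, namely $pr_2^*\psi_1$, the psi-class at the collision point. So $R^jpr_{2*}pr_2^*\psi_1^\vee=0$ for $j>0$ gives $H^j(D_{I,J},\rho^*\psi_i^{B,\vee}|_{D_{I,J}})=H^j(Z,\psi_1^\vee)=0$ by the hypothesis on Hassett stacks with fewer markings, and the argument closes without any acyclicity analysis on the fibre. Until the coefficient, the sign of the twist, and the identification of the restriction are pinned down, the case $i\in I$ — which is the whole point of the proposition — is not proved.
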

\begin{proof}
Let us write the exceptional divisor as $D_{I,J} = \overline{\mathcal{M}}_{0,C_{1}[r+1]}\times\overline{\mathcal{M}}_{g,C_{2}[s+1]}$. We distinguish two cases: $i > r$ and $i\leq r$.

If $i > r$ then $\psi_{i}^{\vee}\cong \rho^{*}\psi_{i}^{\vee}$. Since $R^{j}\rho_{*}\psi_{i}^{\vee} = 0$ we have that 
$$H^{j}(\overline{\mathcal{M}}_{g,B[n]},\psi_{i}^{\vee}) = H^{j}(\overline{\mathcal{M}}_{g,A[n]},\rho^{*}\psi_{i}^{\vee}) = H^{j}(\overline{\mathcal{M}}_{g,A[n]},\psi_{i}^{\vee}) = 0$$

If $i\leq r$, up to reordering, we have $\rho^{*}\psi_{i}^{\vee} = pr_{2}^{*}\psi_{1}^{\vee}$, where $pr_{2}:D_{I,J}\rightarrow\overline{\mathcal{M}}_{g,C_{2}[s+1]}$ is the second projection. We proceed by induction on the dimension of Hassett stacks. If $n = 3$ then any Hassett space is just a point and $H^{j}(\overline{\mathcal{M}}_{0,3},\psi_{i}^{\vee}) = 0$. Furthermore, we have 
$$H^{j}(D_{I,J},\rho^{*}\psi_{i}^{\vee}) = H^{j}(D_{I,J},pr_{2}^{*}\psi_{1}^{\vee})$$ 
Now, let us consider the exact sequence
$$0\mapsto\psi_{i}^{\vee}\rightarrow\rho^{*}\psi_{i}^{\vee}\rightarrow\rho^{*}\psi_{i|D_{I,J}}^{\vee}\mapsto 0$$
By hypothesis we have $H^{j}(\overline{\mathcal{M}}_{g,A[n]},\psi_{i}^{\vee}) = 0$. Furthermore, $R^{j}pr_{2*}pr_{2}^{*}\psi_{1}^{\vee} = 0$ implies 
$$H^{j}(D_{I,J},\rho^{*}\psi_{i}^{\vee}) = H^{j}(D_{I,J},pr_{2}^{*}\psi_{1}^{\vee}) = H^{j}(\overline{\mathcal{M}}_{g,C_{2}[s+1]},\psi_{1}^{\vee}) = 0$$
by induction hypothesis on $\dim\overline{\mathcal{M}}_{g,C_{2}[s+1]} < \dim\overline{\mathcal{M}}_{g,B[n]}$. Since $R^{j}\rho_{*}\rho^{*}\psi_{i}^{\vee} = 0$ taking the long exact sequence in cohomology we conclude that $H^{j}(\overline{\mathcal{M}}_{g,B[n]},\psi_{i}^{\vee}) = H^{j}(\overline{\mathcal{M}}_{g,B[n]},\rho_{*}\psi_{i}^{\vee}) = 0$.
\end{proof}

We will need the following lemma relating the first order infinitesimal deformations of a stack to the deformations of its blow-up along a smooth substack.

\begin{Lemma}\label{defbu}
Let $\mathcal{X}$ be a smooth stack and $\mathcal{Z}\subseteq\mathcal{X}$ be a smooth substack. Then 
$$T^{1}Def_{(\mathcal{X},\mathcal{Z})} = H^{1}(\mathcal{X},T_{\mathcal{X}}(-\logpol \mathcal{Z})) = H^{1}(Bl_{\mathcal{Z}}\mathcal{X},T_{Bl_{\mathcal{Z}}\mathcal{X}}) = T^{1}Def_{Bl_{\mathcal{Z}}\mathcal{X}}$$
Furthermore, the following diagram
\[
  \begin{tikzpicture}[xscale=2.5,yscale=-1.2]
    \node (A0_0) at (0, 0) {$0$};
    \node (A0_1) at (1, 0) {$T_{\widetilde{\mathcal{X}}}$};
    \node (A0_2) at (2, 0) {$\epsilon^{*}T_{\mathcal{X}}$};
    \node (A0_4) at (4, 0) {$Q$};
    \node (A0_5) at (5, 0) {$0$};
    \node (A1_3) at (3, 1) {$\epsilon^{*}T_{\mathcal{X}|E}$};
    \node (A2_0) at (0, 2) {$0$};
    \node (A2_1) at (1, 2) {$L$};
    \node (A2_2) at (2, 2) {$\epsilon^{*}N_{\mathcal{Z}/\mathcal{X}|E}$};
    \node (A2_4) at (4, 2) {$Q$};
    \node (A2_5) at (5, 2) {$0$};
    \path (A0_4) edge [-,double distance=1.5pt] node [auto] {$\scriptstyle{}$} (A2_4);
    \path (A0_2) edge [->] node [auto] {$\scriptstyle{}$} (A2_2);
    \path (A1_3) edge [->] node [auto] {$\scriptstyle{}$} (A2_2);
    \path (A2_4) edge [->] node [auto] {$\scriptstyle{}$} (A2_5);
    \path (A0_0) edge [->] node [auto] {$\scriptstyle{}$} (A0_1);
    \path (A0_1) edge [->] node [auto] {$\scriptstyle{}$} (A0_2);
    \path (A2_2) edge [->] node [auto] {$\scriptstyle{}$} (A2_4);
    \path (A0_4) edge [->] node [auto] {$\scriptstyle{}$} (A0_5);
    \path (A0_2) edge [->] node [auto] {$\scriptstyle{}$} (A0_4);
    \path (A2_0) edge [->] node [auto] {$\scriptstyle{}$} (A2_1);
    \path (A0_2) edge [->] node [auto] {$\scriptstyle{}$} (A1_3);
    \path (A2_1) edge [->] node [auto] {$\scriptstyle{}$} (A2_2);
 \end{tikzpicture}
  \]
induces an isomorphism $H^{i}(\mathcal{Z},N_{\mathcal{Z}/\mathcal{X}})\cong H^{i}(E,Q)$ for any $i\geq 1$, where $L$ is implicitly defined by requiring the second row to be exact.
\end{Lemma}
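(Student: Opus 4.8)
The plan is to reduce all three displayed equalities to a single computation of the derived pushforward $R\epsilon_{*}T_{\widetilde{\mathcal{X}}}$ along the blow-up $\epsilon\colon\widetilde{\mathcal{X}}=Bl_{\mathcal{Z}}\mathcal{X}\to\mathcal{X}$, and then to read off the final cohomological isomorphism from the same short exact sequence. The two outer equalities are formal: since $\widetilde{\mathcal{X}}$ is smooth, $T^{1}Def_{\widetilde{\mathcal{X}}}=H^{1}(\widetilde{\mathcal{X}},T_{\widetilde{\mathcal{X}}})$, while the identification $T^{1}Def_{(\mathcal{X},\mathcal{Z})}=H^{1}(\mathcal{X},T_{\mathcal{X}}(-\logpol\mathcal{Z}))$ is the standard description of the first order deformations of the pair, where $T_{\mathcal{X}}(-\logpol\mathcal{Z})$ denotes the subsheaf of $T_{\mathcal{X}}$ of derivations preserving the ideal of $\mathcal{Z}$; I would recall this, together with the fact that blowing up and down are mutually inverse, so that deforming the pair is the same as deforming $\widetilde{\mathcal{X}}$. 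Thus the crux is the middle equality $H^{1}(\mathcal{X},T_{\mathcal{X}}(-\logpol\mathcal{Z}))=H^{1}(\widetilde{\mathcal{X}},T_{\widetilde{\mathcal{X}}})$, which follows from the Leray spectral sequence once $\epsilon_{*}T_{\widetilde{\mathcal{X}}}=T_{\mathcal{X}}(-\logpol\mathcal{Z})$ and $R^{j}\epsilon_{*}T_{\widetilde{\mathcal{X}}}=0$ for $j>0$ are established.

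First I would set up the top row of the diagram, the exact sequence $0\to T_{\widetilde{\mathcal{X}}}\to\epsilon^{*}T_{\mathcal{X}}\to Q\to 0$, where the first map is the differential $d\epsilon$; it is an isomorphism off the exceptional divisor $E$ and is injective because $T_{\widetilde{\mathcal{X}}}$ is locally free, so $Q$ is supported on $E$. Since the blow-up of a smooth stack along a smooth substack is étale-locally the blow-up of affine space along a linear subspace, I would compute the cokernel $Q$ on a smooth atlas and descend. Writing $\mathcal{Z}=\{x_{1}=\dots=x_{c}=0\}$ in local coordinates, with $c=\codim_{\mathcal{X}}\mathcal{Z}$, an explicit chart computation identifies $Q$ with the universal quotient of the relative Euler sequence on $E=\mathbb{P}(N_{\mathcal{Z}/\mathcal{X}})$, namely the bottom row
\[
0\to \mathcal{O}_{E}(-1)\to \epsilon^{*}N_{\mathcal{Z}/\mathcal{X}}|_{E}=\pi^{*}N_{\mathcal{Z}/\mathcal{X}}\to Q\to 0 ,
\]
where $\pi=\epsilon|_{E}\colon E\to\mathcal{Z}$ and $L=\mathcal{O}_{E}(-1)=N_{E/\widetilde{\mathcal{X}}}$ is the tautological subbundle; concretely the image of $\partial_{t}$ under $d\epsilon$ spans the tautological line, which is exactly the class killed in $Q$, and this pins down the lower row and the commutativity of the diagram. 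This identification of $Q$ is the step I expect to be the main obstacle, both because of the coordinate bookkeeping and because one must check that the local pictures descend along the atlas so as to produce the displayed maps intrinsically on the stack.

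With the diagram in hand the remainder is cohomological. From the relative Euler sequence above, the vanishing $R\pi_{*}\mathcal{O}_{E}(-1)=0$ (since $H^{j}(\mathbb{P}^{c-1},\mathcal{O}(-1))=0$ for all $j$) together with $R\pi_{*}\pi^{*}N_{\mathcal{Z}/\mathcal{X}}=N_{\mathcal{Z}/\mathcal{X}}$ (projection formula and $R\pi_{*}\mathcal{O}_{E}=\mathcal{O}_{\mathcal{Z}}$) give $\pi_{*}Q\cong N_{\mathcal{Z}/\mathcal{X}}$ and $R^{j}\pi_{*}Q=0$ for $j>0$. Equivalently, the factorization $H^{i}(\mathcal{Z},N_{\mathcal{Z}/\mathcal{X}})\cong H^{i}(E,\pi^{*}N_{\mathcal{Z}/\mathcal{X}})\to H^{i}(E,Q)$ — the two maps encoded by the diagram — is an isomorphism because the error terms $H^{\bullet}(E,\mathcal{O}_{E}(-1))$ vanish, yielding the asserted isomorphism $H^{i}(E,Q)\cong H^{i}(\mathcal{Z},N_{\mathcal{Z}/\mathcal{X}})$ for $i\geq 1$.

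Finally, to close the middle equality I would push the top row forward by $\epsilon$. Using $\epsilon_{*}\epsilon^{*}T_{\mathcal{X}}=T_{\mathcal{X}}$ with $R^{j}\epsilon_{*}\epsilon^{*}T_{\mathcal{X}}=0$ for $j>0$, and $\epsilon_{*}Q=\pi_{*}Q=N_{\mathcal{Z}/\mathcal{X}}$ with $R^{j}\epsilon_{*}Q=0$, the long exact sequence of $R\epsilon_{*}$ collapses to
\[
0\to\epsilon_{*}T_{\widetilde{\mathcal{X}}}\to T_{\mathcal{X}}\to N_{\mathcal{Z}/\mathcal{X}}\to R^{1}\epsilon_{*}T_{\widetilde{\mathcal{X}}}\to 0 ,
\]
with $R^{j}\epsilon_{*}T_{\widetilde{\mathcal{X}}}=0$ for $j\geq 2$. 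Since $T_{\mathcal{X}}\to N_{\mathcal{Z}/\mathcal{X}}$ is the canonical surjection, this forces $R^{1}\epsilon_{*}T_{\widetilde{\mathcal{X}}}=0$ and $\epsilon_{*}T_{\widetilde{\mathcal{X}}}=\ker(T_{\mathcal{X}}\to N_{\mathcal{Z}/\mathcal{X}})=T_{\mathcal{X}}(-\logpol\mathcal{Z})$, so Leray gives $H^{1}(\widetilde{\mathcal{X}},T_{\widetilde{\mathcal{X}}})=H^{1}(\mathcal{X},T_{\mathcal{X}}(-\logpol\mathcal{Z}))$ and completes the chain. The only genuinely delicate points are the local identification of $Q$ and the care needed to run these pushforward and deformation-theoretic statements in the stacky setting, both of which I would handle by working on a smooth atlas and descending.
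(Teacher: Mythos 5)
Your proposal is correct and follows essentially the same route as the paper: identify $Q$ via the Euler-type sequence on $E$ with $L=\mathcal{O}_E(-1)$, kill $L$ using $H^{\bullet}(\mathbb{P}^{c-1},\mathcal{O}(-1))=0$, apply the projection formula to $\epsilon^{*}N_{\mathcal{Z}/\mathcal{X}|E}$, and conclude by Leray. The only difference is that you derive $R^{i}\epsilon_{*}T_{\widetilde{\mathcal{X}}}=0$ and $\epsilon_{*}T_{\widetilde{\mathcal{X}}}=T_{\mathcal{X}}(-\logpol\mathcal{Z})$ from the pushforward of the top row, where the paper simply asserts these; that is a welcome extra detail, not a different argument.
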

\begin{proof}
The argument is the same as for smooth varieties. Let $\widetilde{\mathcal{X}} = Bl_{\mathcal{Z}}\mathcal{X}$ be the blow-up and $\epsilon:\widetilde{\mathcal{X}}\rightarrow \mathcal{X}$ be the blow-up morphism with exceptional divisor $E = \mathbb{P}(N_{\mathcal{Z}/\mathcal{X}})$. 

If $z\in \mathcal{Z}$ then $\epsilon^{-1}(z)\cong\mathbb{P}^{h}$, with $h = \codim(\mathcal{Z})-1$. Furthermore, $L_{|\epsilon^{-1}(z)}\cong\mathcal{O}_{\mathbb{P}^{h}}(-1)$ and $H^{i}(\epsilon^{-1}(z),L_{|\epsilon^{-1}(z)}) = 0$ for any $i$. This implies that $R^{i}\epsilon_{*}L = 0$ for any $i$. Therefore $R^{i}\epsilon_{*}\epsilon^{*}N_{\mathcal{Z}/\mathcal{X}|E}\cong R^{i}\epsilon_{*}Q$ for any $i$.

Now $R^{i}\epsilon_{*}\epsilon^{*}N_{\mathcal{Z}/\mathcal{X}|E}\cong R^{i}\epsilon_{*}\mathcal{O}_{E}\otimes N_{\mathcal{Z}/\mathcal{X}|E}$. Furthermore, $H^{i}(\epsilon^{-1}(z),\mathcal{O}_{\epsilon^{-1}(z)}) = 0$ for any $i > 0$ yields $\epsilon_{*}\mathcal{O}_{E}\otimes N_{\mathcal{Z}/\mathcal{X}|E} = N_{\mathcal{Z}/\mathcal{X}|E}$ and $R^{i}\epsilon_{*}\mathcal{O}_{E}\otimes N_{\mathcal{Z}/\mathcal{X}|E} = 0$ for any $i \geq 1$. Then the Leray spectral sequence for $Q$ degenerates and we get $H^{i}(E,Q) = H^{i}(\mathcal{Z},N_{\mathcal{Z}/\mathcal{X}})$ for any $i\geq 1$. 

Since $R^{i}\epsilon_{*}T_{\widetilde{\mathcal{X}}} = 0$ for $i>0$ we have $\epsilon_{*}T_{\widetilde{\mathcal{X}}} = T_{\mathcal{X}}(-\logpol\mathcal{Z})$. In turns this yields $H^{i}(\widetilde{\mathcal{X}},T_{\widetilde{\mathcal{X}}}) = H^{i}(\mathcal{X},\epsilon_{*}T_{\widetilde{\mathcal{X}}}) = H^{i}(\mathcal{X},T_{\mathcal{X}}(-\logpol\mathcal{Z}))$.
\end{proof}

Now, we are ready to prove the main result of this section.

\begin{Theorem}\label{righas}
If $g = 0$ over any field and if $g\geq 1$ over any field of characteristic zero, we have that $H^{j}(\overline{\mathcal{M}}_{g,A[n]},\psi_{i}^{\vee}) = 0$ for any $i=1,...,n$ and $j>0$. Furthermore, $\overline{\mathcal{M}}_{g,A[n]}$ is rigid.
\end{Theorem}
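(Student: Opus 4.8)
The plan is to prove both statements at once, by induction on the number $n$ of markings, reducing everything to the Deligne--Mumford stack $\overline{\mathcal{M}}_{g,n}$ (all weights equal to $1$) and then transporting the conclusions along reduction morphisms. Since the weight vector $(1,\dots,1)$ dominates every admissible $A[n]$, there is a reduction morphism $\rho:\overline{\mathcal{M}}_{g,n}\to\overline{\mathcal{M}}_{g,A[n]}$, which by \cite[Remark 4.6]{Ha} factors as a chain of single--divisor blow-ups
$$\overline{\mathcal{M}}_{g,n}=\mathcal{X}_0\to\mathcal{X}_1\to\cdots\to\mathcal{X}_k=\overline{\mathcal{M}}_{g,A[n]},\qquad \mathcal{X}_{i-1}=Bl_{Z_i}\mathcal{X}_i,$$
where each centre $Z_i=\overline{\mathcal{M}}_{g,C_i[s_i+1]}$ is a boundary stratum with $s_i+1<n$ markings. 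The base of the induction is $\overline{\mathcal{M}}_{g,n}$ itself: for $g\ge 1$ in characteristic zero its rigidity and the vanishing $H^j(\overline{\mathcal{M}}_{g,n},\psi_i^\vee)=0$ ($j>0$) are provided by \cite{FM}; for $g=0$ both facts hold over an arbitrary field, since $\overline{M}_{0,n}$ is Kapranov's iterated blow-up of $\mathbb{P}^{n-3}$ over $\mathbb{Z}$ and (by symmetry in the markings) $\psi_i^\vee$ is the pull-back of $\mathcal{O}_{\mathbb{P}^{n-3}}(-1)$, so that $H^j(\overline{M}_{0,n},\psi_i^\vee)=H^j(\mathbb{P}^{n-3},\mathcal{O}(-1))=0$ by the projection formula and $Rf_{*}\mathcal{O}=\mathcal{O}$.

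For the first assertion I would fix $n$ and assume inductively that $H^j(\overline{\mathcal{M}}_{g,C[m]},\psi_i^\vee)=0$ for $j>0$ and every Hassett stack with $m<n$ markings; this is exactly the standing hypothesis of Proposition \ref{psihassett}. Starting from the vanishing on $\mathcal{X}_0=\overline{\mathcal{M}}_{g,n}$, I apply Proposition \ref{psihassett} to each blow-up $\mathcal{X}_{i-1}\to\mathcal{X}_i$ of the chain, which transfers the vanishing of $H^j(\psi_i^\vee)$ from the blow-up to its contraction. Running down the chain gives $H^j(\overline{\mathcal{M}}_{g,A[n]},\psi_i^\vee)=0$ for all $i$ and all $j>0$, settling the first statement at level $n$.

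For rigidity I descend the same chain, now feeding the first assertion into Lemma \ref{defbu} and Proposition \ref{normalbun}. Each $\mathcal{X}_i$ is smooth, so rigidity means $H^1(\mathcal{X}_i,T_{\mathcal{X}_i})=0$. From the sequence $0\to T_{\mathcal{X}_{i-1}}\to\epsilon^*T_{\mathcal{X}_i}\to Q\to 0$ of Lemma \ref{defbu}, whose cohomology reads
$$H^1(\mathcal{X}_{i-1},T_{\mathcal{X}_{i-1}})\to H^1(\mathcal{X}_i,T_{\mathcal{X}_i})\to H^1(Z_i,N_{Z_i/\mathcal{X}_i}),$$
one sees that if the blow-up $\mathcal{X}_{i-1}$ is rigid and $H^1(Z_i,N_{Z_i/\mathcal{X}_i})=0$, then the contraction $\mathcal{X}_i$ is rigid. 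Proposition \ref{normalbun} identifies $N_{Z_i/\mathcal{X}_i}=(\psi_1^\vee)^{\oplus(r_i-1)}$ on $Z_i=\overline{\mathcal{M}}_{g,C_i[s_i+1]}$, whence $H^1(Z_i,N_{Z_i/\mathcal{X}_i})=(r_i-1)\,H^1(\overline{\mathcal{M}}_{g,C_i[s_i+1]},\psi_1^\vee)=0$ by the first assertion applied to the smaller stack $Z_i$ (it has $s_i+1<n$ markings, so this is the inductive hypothesis). Since $\mathcal{X}_0=\overline{\mathcal{M}}_{g,n}$ is rigid, descending the chain yields rigidity of $\mathcal{X}_k=\overline{\mathcal{M}}_{g,A[n]}$.

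The crux, and the reason the two statements must be proved together, is precisely this interlocking: by Proposition \ref{normalbun} the obstruction term governing the rigidity step is a sum of dual $\psi$-classes on a lower Hassett stack, so rigidity at level $n$ is controlled by the $\psi$-class vanishing at levels $<n$, while that vanishing is in turn propagated by Proposition \ref{psihassett}. Two points deserve care. First, I descend from $\overline{\mathcal{M}}_{g,n}$ rather than building up from $\mathbb{P}^{n-3}$: descending requires only $H^1(Z_i,N_{Z_i/\mathcal{X}_i})=0$, whereas the ascending direction would force surjectivity of the evaluation $H^0(\mathcal{X}_i,T_{\mathcal{X}_i})\to H^0(Z_i,N_{Z_i/\mathcal{X}_i})$, which is not automatic when $Z_i$ is a point and $H^0(Z_i,N_{Z_i/\mathcal{X}_i})\ne 0$. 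Second, the characteristic-zero hypothesis for $g\ge 1$ enters only through the rigidity and $\psi$-vanishing of the Deligne--Mumford base $\overline{\mathcal{M}}_{g,n}$; everything downstream is characteristic-free, which is exactly why the genus-zero statement survives over an arbitrary field.
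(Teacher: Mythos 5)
Your descent is essentially the paper's argument: the same factorization of $\rho:\overline{\mathcal{M}}_{g,n}\to\overline{\mathcal{M}}_{g,A[n]}$ into single-divisor blow-downs, the same interlocking induction in which Proposition \ref{psihassett} propagates the $\psi$-vanishing along the chain while Proposition \ref{normalbun} and Lemma \ref{defbu} convert that vanishing on the smaller stack $Z_i$ into the rigidity step $H^1(\mathcal{X}_{i-1},T_{\mathcal{X}_{i-1}})\twoheadrightarrow H^1(\mathcal{X}_i,T_{\mathcal{X}_i})$. Your genus-zero base case (Kapranov's map realizing $\psi_n$ as $f_n^*\mathcal{O}_{\mathbb{P}^{n-3}}(1)$, then $Rf_{n*}\mathcal{O}=\mathcal{O}$) is exactly what the paper imports from Equation (3.1) of \cite[Theorem 3.1]{FM}, and your remark that one must descend from $\overline{\mathcal{M}}_{g,n}$ rather than ascend is the correct reading of how the exact sequence is used.

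The one genuine gap is the base case of the first assertion for $g\geq 1$. You write that $H^j(\overline{\mathcal{M}}_{g,n},\psi_i^\vee)=0$ for $j>0$ is ``provided by \cite{FM}'', but that reference only covers genus zero; the paper has to prove this vanishing, and it is the only substantive new input in the whole proof. The argument is: by Knudsen \cite[Theorem 4]{Kn2} the class $\psi_n$ on $\overline{\mathcal{M}}_{g,n}$ is identified with $\omega_\pi(\Sigma)$ under the isomorphism $\overline{\mathcal{M}}_{g,n}\cong\mathcal{U}_{g,n-1}$ with the universal curve over $\overline{\mathcal{M}}_{g,n-1}$; by Keel \cite[Theorem 0.4]{Ke} this class is nef and big; and Kodaira vanishing \cite[Theorem A.1]{Hac} then gives $H^j(\overline{\mathcal{M}}_{g,n},\psi_n^\vee)=H^j(\mathcal{U}_{g,n-1},\omega_\pi(\Sigma)^\vee)=0$ for $j>0$. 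This is precisely where the characteristic-zero hypothesis for $g\geq 1$ enters (consistently with your closing observation, but you need to actually supply the Kodaira-vanishing argument rather than cite it away). Without this step the induction for $g\geq 1$ has no starting point.
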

\begin{proof}
Recall that any Hassett stack $\overline{\mathcal{M}}_{g,A[n]}$ receives a reduction morphism $\rho:\overline{\mathcal{M}}_{g,n}\rightarrow\overline{\mathcal{M}}_{g,A[n]}$. Furthermore, $\rho$ can be factored as a composition of reduction morphisms $\rho = \rho_1\circ...\circ\rho_{k}$ where $\rho_{i}:\overline{\mathcal{M}}_{g,A^{'}[n]}\rightarrow\overline{\mathcal{M}}_{g,B^{'}[n]}$ is the blow-up of $\overline{\mathcal{M}}_{g,B^{'}[n]}$ along the image of a single divisor of type $D_{I,J}$. We deduce the first statement by induction on $k$. At the first step of the induction $k=0$ we have $\overline{\mathcal{M}}_{g,A[n]} = \overline{\mathcal{M}}_{g,n}$.

By Equation $(3.1)$ in the proof of \cite[Theorem 3.1]{FM} we get the statement in the case $g = 0$ over any field. Let us prove the same for $g\geq 1$ over any field of characteristic zero. 

By \cite[Theorem 4]{Kn2} the line bundle $\psi_{n}$ on $\overline{\mathcal{M}}_{g,n}$ is identified with the pull-back of the line bundle $\omega_{\pi}(\Sigma)$ via the isomorphism $c:\overline{\mathcal{M}}_{g,n}\rightarrow\mathcal{U}_{g,n-1}$, where $\pi:\mathcal{U}_{g,n-1}\rightarrow\overline{\mathcal{M}}_{g,n-1}$ is the universal curve over $\overline{\mathcal{M}}_{g,n-1}$. Furthermore, by \cite[Theorem 0.4]{Ke} the $\mathbb{Q}$-line bundle $p_{*}\omega_{\pi}(\Sigma)$ is nef and big, where $p:\mathcal{U}_{g,n-1}\rightarrow U_{g,n-1}$ is the map on the coarse moduli space. 

Since we are over a field of characteristic zero we can apply Kodaira vanishing \cite[Theorem A.1]{Hac} to the line bundle $p_{*}\omega_{\pi}(\Sigma)$. In particular, we get 
$$H^{j}(\overline{\mathcal{M}}_{g,n},\psi_{n}^{\vee}) = H^{j}(\mathcal{U}_{g,n-1},\omega_{\pi}(\Sigma)) = 0$$ 
for $j>0$.

Now, let us consider the second statement. Since by \cite[Theorem 3.1]{FM} we know that $H^{1}(\overline{M}_{0,n},T_{\overline{M}_{0,n}}) = 0$ over any field, and by \cite[Theorem 2.1]{Hac} we have $H^{1}(\overline{\mathcal{M}}_{g,n},T_{\overline{\mathcal{M}}_{g,n}}) = 0$ for $g\geq 1$ over any field of characteristic zero we may proceed by induction on $k$ and prove the second statement for a single morphism $\rho:\overline{\mathcal{M}}_{g,A^{'}[n]}\rightarrow\overline{\mathcal{M}}_{g,B^{'}[n]}$. 

Let $E$ be the exceptional locus of the morphism $\rho$, and let $Z = \rho(E)$. We denote by $T^{1}Def_{(\overline{\mathcal{M}}_{g,B^{'}[n]},Z)}$ the space of first order infinitesimal deformation of the couple $(\overline{\mathcal{M}}_{g,B^{'}[n]},Z)$. 

Then we have the following exact sequence 
$$H^{0}(Z,N_{Z/\overline{\mathcal{M}}_{g,B^{'}[n]}})\rightarrow T^{1}Def_{(\overline{\mathcal{M}}_{g,B^{'}[n]},Z)}\rightarrow T^{1}Def_{\overline{\mathcal{M}}_{g,B^{'}[n]}}\rightarrow H^{1}(Z,N_{Z/\overline{\mathcal{M}}_{g,B^{'}[n]}})$$
By Proposition \ref{normalbun} we have $N_{Z/\overline{\mathcal{M}}_{g,B^{'}[n]}}\cong (\psi_{1}^{\vee})^{\oplus (r-1)}$ and by Proposition \ref{psihassett} we have 
$$H^{1}(\overline{\mathcal{M}}_{g,B^{'}[n]},(\psi_{1}^{\vee})^{\oplus (r-1)}) = 0$$ 
Therefore $H^{1}(Z,N_{Z/\overline{\mathcal{M}}_{g,B^{'}[n]}}) = 0$. By Lemma \ref{defbu} we get
$$T^{1}Def_{(\overline{\mathcal{M}}_{g,B^{'}[n]},Z)} = H^{1}(\overline{\mathcal{M}}_{g,B^{'}[n]},T_{\overline{\mathcal{M}}_{g,B^{'}[n]}}(-\logpol Z)) = H^{1}(\overline{\mathcal{M}}_{g,A^{'}[n]}, T_{\overline{\mathcal{M}}_{g,A^{'}[n]}})$$ 
Furthermore, we have $H^{1}(\overline{\mathcal{M}}_{g,A^{'}[n]}, T_{\overline{\mathcal{M}}_{g,A^{'}[n]}}) = 0$ by induction hypothesis. Therefore $$T^{1}Def_{\overline{\mathcal{M}}_{g,B^{'}[n]}} = H^{1}(\overline{\mathcal{M}}_{g,B^{'}[n]},T_{\overline{\mathcal{M}}_{g,B^{'}[n]}}) = 0$$ 
Finally, by induction we conclude that $H^{1}(\overline{\mathcal{M}}_{g,A[n]},T_{\overline{\mathcal{M}}_{g,A[n]}}) = 0$, that is $\overline{\mathcal{M}}_{g,A[n]}$ is rigid.
\end{proof}

Now, let us consider Hassett moduli spaces $\overline{M}_{g,A[n]}$ such that $g+n\geq 4$. We begin by studying locally trivial deformations.

\begin{Proposition}\label{ltdhas}
If $g+n\geq 4$, over a field of characteristic zero, the coarse moduli space $\overline{M}_{g,A[n]}$ does not have locally trivial first order infinitesimal deformations for any vector of weights $A[n]$.  
\end{Proposition}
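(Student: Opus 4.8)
The statement asserts that $\overline{M}_{g,A[n]}$ has no nontrivial locally trivial first order infinitesimal deformations, that is, that $H^{1}(\overline{M}_{g,A[n]},T_{\overline{M}_{g,A[n]}})=0$, with $T_{\overline{M}_{g,A[n]}}=\mathcal{H}om(\Omega_{\overline{M}_{g,A[n]}},\mathcal{O}_{\overline{M}_{g,A[n]}})$; recall that $\overline{M}_{g,A[n]}$ is normal, being the coarse space of a smooth stack, and that its locally trivial deformations are governed by this group. The plan is to transport the rigidity of the stack $\overline{\mathcal{M}}_{g,A[n]}$, already proved in Theorem \ref{righas}, to its coarse moduli space along the coarse moduli morphism $\pi\colon\overline{\mathcal{M}}_{g,A[n]}\to\overline{M}_{g,A[n]}=:X$. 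I would first dispose of $g=0$, where the stack coincides with its coarse space and the claim is immediate, and then treat $g\geq 1$ in characteristic zero.

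The heart of the plan is a cohomological comparison along $\pi$ resting on two facts, both special to characteristic zero. First, \'etale locally on $X$ the smooth Deligne--Mumford stack is of the form $[U/G]$ with $U$ smooth and $G$ the finite automorphism group of the corresponding stable curve, so that $X=U/G$ locally; as taking invariants under a finite group is exact in characteristic zero, $\pi$ should be cohomologically trivial, i.e. $\pi_{*}\mathcal{O}_{\overline{\mathcal{M}}_{g,A[n]}}=\mathcal{O}_{X}$ and $R^{i}\pi_{*}\mathcal{F}=0$ for every coherent $\mathcal{F}$ and every $i>0$. Second, I would identify $\pi_{*}T_{\overline{\mathcal{M}}_{g,A[n]}}\cong T_{X}$. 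In the local model $\pi_{*}T_{[U/G]}=(\mathrm{Der}_{K}\mathcal{O}_{U})^{G}$, the $G$-invariant derivations of $\mathcal{O}_{U}$, whereas $T_{X}=\mathrm{Der}_{K}(\mathcal{O}_{U}^{G})$. An invariant derivation preserves $\mathcal{O}_{U}^{G}$; conversely, since $\mathcal{O}_{U}$ is normal and equals the integral closure of $\mathcal{O}_{U}^{G}$ in its fraction field and the extension $\mathrm{Frac}(\mathcal{O}_{U}^{G})\subseteq\mathrm{Frac}(\mathcal{O}_{U})$ is separable in characteristic zero, every derivation of $\mathcal{O}_{U}^{G}$ should extend uniquely to a $G$-invariant derivation of $\mathcal{O}_{U}$, giving the desired isomorphism pointwise.

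Granting these, the Leray spectral sequence for $\pi$ gives $H^{1}(X,T_{X})=H^{1}(X,\pi_{*}T_{\overline{\mathcal{M}}_{g,A[n]}})=H^{1}(\overline{\mathcal{M}}_{g,A[n]},T_{\overline{\mathcal{M}}_{g,A[n]}})$, and the last group is zero by Theorem \ref{righas} over a field of characteristic zero, which closes the argument. The hypothesis $g+n\geq 4$ serves only to delimit the range of interest, namely where $X$ may be genuinely singular (for $g\geq 1$); the boundary value $g+n=4$ is included here even though full rigidity is established only for $g+n>4$, this proposition supplying the ``locally trivial'' half of the picture in that case.

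The step I expect to be the main obstacle is the sheaf-level identification $\pi_{*}T_{\overline{\mathcal{M}}_{g,A[n]}}\cong T_{X}$ together with the vanishing $R^{i}\pi_{*}=0$ for $i>0$: both hinge on the characteristic zero hypothesis, which renders the finite stabilizers linearly reductive and legitimises the averaging and derivation-extension arguments, and which fails in positive characteristic, exactly matching the restriction in the statement. Beyond this, I would have to check carefully that $X$ is normal and that the local presentations $[U/G]$ glue, so that the pointwise isomorphisms assemble into a global isomorphism of sheaves on $X$.
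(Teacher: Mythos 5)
There is a genuine gap, and it sits exactly where you predicted: the identification $\pi_{*}T_{\overline{\mathcal{M}}_{g,A[n]}}\cong T_{\overline{M}_{g,A[n]}}$ is false whenever the coarse moduli map is ramified along a divisor, and for $g\geq 1$ it always is. Already in the local model $U=\mathbb{A}^{1}=\Spec K[x]$ with $G=\mathbb{Z}/2$ acting by $x\mapsto -x$, one has $X=\Spec K[y]$ with $y=x^{2}$, and the derivation $\partial_{y}$ of $\mathcal{O}_{U}^{G}$ extends to $\tfrac{1}{2x}\partial_{x}$ on the fraction field, which does \emph{not} preserve $K[x]$; the invariant derivations are $xK[x^{2}]\partial_{x}$, so $(\pi_{*}T_{U})^{G}=y\,T_{X}\subsetneq T_{X}$. (Seidenberg's extension theorem concerns the integral closure inside the \emph{same} fraction field; separability of the finite extension $\mathrm{Frac}(\mathcal{O}_{U}^{G})\subset\mathrm{Frac}(\mathcal{O}_{U})$ only gives a unique extension to the fraction field, not preservation of $\mathcal{O}_{U}$.) This local model occurs on $\overline{M}_{g,A[n]}$ along the elliptic-tail divisors: at the generic point of such a divisor the stabilizer is $\mathbb{Z}/2$ generated by the elliptic involution, which acts trivially on all deformation directions except the node-smoothing parameter, where it acts by $-1$, i.e.\ as a quasi-reflection. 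The correct statement is therefore only $\pi_{*}T_{\overline{\mathcal{M}}_{g,A[n]}}\cong T_{X}(-\logpol B)$ for the branch divisor $B$, and closing the gap between $H^{1}(X,T_{X}(-\logpol B))$ and $H^{1}(X,T_{X})$ requires controlling $H^{0}$ and $H^{1}$ of the normal bundle of $B$ --- a substantial extra argument, not a routine check. This is precisely why Hacking proves the rigidity of the stack $\overline{\mathcal{M}}_{g,n}$ and of the coarse space $\overline{M}_{g,n}$ as two separate theorems, and it also explains why your argument never uses the hypothesis $g+n\geq 4$: that hypothesis enters through the coarse-space statement, which your reduction to the stack cannot see.

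For comparison, the paper avoids descending from the stack altogether. It takes the reduction morphism $\rho:\overline{M}_{g,n}\rightarrow\overline{M}_{g,A[n]}$ between \emph{coarse} spaces, realizes it step by step as a blow-up of the image of a boundary divisor, and uses Lemma \ref{defbu} together with the normal bundle computation $N\cong(\psi_{1}^{\vee})^{\oplus(r-1)}$ and the vanishing $H^{1}(\psi_{1}^{\vee})=0$ to show $H^{1}(\overline{M}_{g,A[n]},T_{\overline{M}_{g,A[n]}})\cong H^{1}(\overline{M}_{g,n},T_{\overline{M}_{g,n}})$; the latter vanishes for $g+n\geq 4$ by Hacking's theorem for the coarse space. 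If you want to keep your strategy, you would in effect have to reprove that coarse-space input, so the reduction-morphism route is the economical one here.
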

\begin{proof}
Without loss of generality we can assume that there exists a reduction morphism $\rho:\overline{M}_{g,n}\rightarrow\overline{M}_{g,A[n]}$ contracting a single boundary divisor $D:=D_{I,J}$. Let $\rho(D) = \overline{\mathcal{M}}_{g,C[s+1]}$ be the image of the exceptional divisor. We have the following diagram
\[
  \begin{tikzpicture}[xscale=3.0,yscale=-1.2]
    \node (A0_0) at (0, 0) {$0$};
    \node (A0_1) at (1, 0) {$T_{\overline{M}_{g,n}}$};
    \node (A0_2) at (2, 0) {$\rho^{*}T_{\overline{M}_{g,A[n]}}$};
    \node (A0_4) at (4, 0) {$Q$};
    \node (A0_5) at (5, 0) {$0$};
    \node (A1_3) at (3, 1) {$\rho^{*}T_{\overline{M}_{g,A[n]}|D}$};
    \node (A2_0) at (0, 2) {$0$};
    \node (A2_1) at (1, 2) {$L$};
    \node (A2_2) at (2, 2) {$\rho^{*}N_{\overline{M}_{g,C[s+1]}/\overline{M}_{g,A[n]}|D}$};
    \node (A2_4) at (4, 2) {$Q$};
    \node (A2_5) at (5, 2) {$0$};
    \path (A0_4) edge [-,double distance=1.5pt] node [auto] {$\scriptstyle{}$} (A2_4);
    \path (A0_2) edge [->] node [auto] {$\scriptstyle{}$} (A2_2);
    \path (A1_3) edge [->] node [auto] {$\scriptstyle{}$} (A2_2);
    \path (A2_4) edge [->] node [auto] {$\scriptstyle{}$} (A2_5);
    \path (A0_0) edge [->] node [auto] {$\scriptstyle{}$} (A0_1);
    \path (A0_1) edge [->] node [auto] {$\scriptstyle{}$} (A0_2);
    \path (A2_2) edge [->] node [auto] {$\scriptstyle{}$} (A2_4);
    \path (A0_4) edge [->] node [auto] {$\scriptstyle{}$} (A0_5);
    \path (A0_2) edge [->] node [auto] {$\scriptstyle{}$} (A0_4);
    \path (A2_0) edge [->] node [auto] {$\scriptstyle{}$} (A2_1);
    \path (A0_2) edge [->] node [auto] {$\scriptstyle{}$} (A1_3);
    \path (A2_1) edge [->] node [auto] {$\scriptstyle{}$} (A2_2);
  \end{tikzpicture}
  \]
where $N_{\overline{M}_{g,C[s+1]}/\overline{M}_{g,A[n]}}=\pi_*N_{\overline{\mathcal{M}}_{g,C[s+1]}/\overline{\mathcal{M}}_{g,A[n]}}$ and $\pi:\overline{\mathcal{M}}_{g,C[s+1]}\rightarrow\overline{M}_{g,C[s+1]}$ is the coarse moduli map. By Lemma \ref{defbu} we have 
$$H^{i}(D,Q) \cong H^{i}(\overline{M}_{g,C[s+1]},N_{\overline{M}_{g,C[s+1]}/\overline{M}_{g,A[n]}})$$
and by Lemma \ref{normalbun} $N_{\overline{\mathcal{M}}_{g,C[s+1]}/\overline{\mathcal{M}}_{g,A[n]}} = (\psi_{1}^{\vee})^{\oplus (r-1)}$. Furthermore by Theorem \ref{righas} we get $H^{1}(\overline{\mathcal{M}}_{g,C[s+1]},\psi_{1}^{\vee}) = 0$ and $H^{1}(\overline{\mathcal{M}}_{g,C[s+1]},N_{\overline{\mathcal{M}}_{g,C[s+1]}/\overline{\mathcal{M}}_{g,A[n]}}) = 0$. Therefore 
$$H^{1}(\overline{M}_{g,C[s+1]},N_{\overline{M}_{g,C[s+1]}/\overline{M}_{g,A[n]}}) = 0$$
as well. Let us consider the exact sequence in cohomology
$$...\rightarrow H^{1}(\overline{M}_{g,n},T_{\overline{M}_{g,n}})\rightarrow H^{1}(\overline{M}_{g,n},\rho^{*}T_{\overline{M}_{g,A[n]}})\rightarrow H^{1}(D,Q)\rightarrow...$$
Since $H^{1}(D,Q) = 0$ we get $H^{1}(\overline{M}_{g,n},\rho^{*}T_{\overline{M}_{g,A[n]}})\cong H^{1}(\overline{M}_{g,n},T_{\overline{M}_{g,n}})$. Furthermore $\rho$ is birational and $\rho_{*}\mathcal{O}_{\overline{M}_{g,n}} = \mathcal{O}_{\overline{M}_{g,A[n]}}$. By the projection formula we have $\rho_{*}\rho^{*}T_{\overline{M}_{g,A[n]}} = T_{\overline{M}_{g,A[n]}}$. Since $R^{i}\rho_{*}\rho^{*}T_{\overline{M}_{g,A[n]}} = 0$ for $i>0$ we conclude
$$H^{1}(\overline{M}_{g,A[n]},T_{\overline{M}_{g,A[n]}}) \cong H^{1}(\overline{M}_{g,A[n]},\rho_{*}\rho^{*},T_{\overline{M}_{g,A[n]}}) \cong H^{1}(\overline{M}_{g,n},\rho^{*}T_{\overline{M}_{g,A[n]}})$$
On the other hand, if $g+n\geq 4$ then 
$$H^{1}(\overline{M}_{g,A[n]},\rho^{*}T_{\overline{M}_{g,A[n]}})\cong H^{1}(\overline{M}_{g,n},T_{\overline{M}_{g,n}})=0$$
by \cite[Theorem 2.3]{Hac}. We conclude that, if $g+n\geq 4$ then $H^{1}(\overline{M}_{g,A[n]},T_{\overline{M}_{g,A[n]}}) = 0$, that is $\overline{M}_{g,A[n]}$ does not have locally trivial first order infinitesimal deformations.
\end{proof}

Finally, we get the following rigidity result for the coarse moduli spaces $\overline{M}_{g,A[n]}$.

\begin{Theorem}\label{rigc}
If $g+n> 4$, over an algebraically closed field of characteristic zero, the coarse moduli space $\overline{M}_{g,A[n]}$ is rigid for any vector of weights $A[n]$.
\end{Theorem}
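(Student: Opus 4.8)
Write $X=\overline{M}_{g,A[n]}$ and recall that, being the coarse space of a smooth Deligne--Mumford stack, $X$ is a normal variety with finite quotient singularities, so its first order infinitesimal deformations are controlled by $\Ext^{1}(\Omega_{X},\mathcal{O}_{X})$. The plan is to run the local-to-global spectral sequence for $\Ext$ recalled in Section \ref{HK}. Since $g+n>4$ implies $g+n\geq 4$, Proposition \ref{ltdhas} gives $H^{1}(X,T_{X})=0$, and hence the exact sequence
$$0\to H^{1}(X,T_{X})\to\Ext^{1}(\Omega_{X},\mathcal{O}_{X})\to H^{0}(X,\mathcal{E}xt^{1}(\Omega_{X},\mathcal{O}_{X}))$$
reduces the whole statement to the vanishing $H^{0}(X,\mathcal{E}xt^{1}(\Omega_{X},\mathcal{O}_{X}))=0$. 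Thus from the outset the problem is localized on the singular locus $\Sing(X)$, which is where the sheaf $\mathcal{E}xt^{1}(\Omega_{X},\mathcal{O}_{X})$ is supported.

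Next I would describe the singularities of $X$ explicitly. Over an algebraically closed field of characteristic zero the stabilizers of $\overline{\mathcal{M}}_{g,A[n]}$ are the automorphism groups of the weighted stable curves, and the generic points of the codimension two components of $\Sing(X)$ correspond to curves carrying an involution (an elliptic tail, or a node exchanged with its conjugate) or an order three automorphism; these produce transversal $A_{1}$ and $\tfrac{1}{3}(1,1)$ singularities respectively. At this point I would invoke the deformation theory of such singularities developed in \cite[Sections 5.4, 5.5]{FM}: for a variety with transversal $A_{1}$ and $\tfrac{1}{3}(1,1)$ singularities the sheaf $\mathcal{E}xt^{1}(\Omega_{X},\mathcal{O}_{X})$ is the push-forward of an explicit line bundle supported on each component $\Sigma$ of the singular locus, determined by the action of the automorphism on the normal directions to the fixed locus inside the stack.

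The core computation is then to identify, component by component, this line bundle. Each $\Sigma$ is the image of a boundary-type stratum, hence (a finite quotient of) a product of smaller Hassett stacks glued at the node responsible for the automorphism, and the normal direction smoothing that node is governed by the cotangent line at the node. Following \cite[Sections 5.4, 5.5]{FM} the resulting line bundle on $\Sigma$ is a strictly negative tensor power of the $\psi$-class at the gluing point, i.e. of the form $(\psi_{1}^{\vee})^{\otimes m}$ with $m\geq 1$ after the identification of $\Sigma$ with a lower-dimensional Hassett space. Since $g+n>4$ each such $\Sigma$ is positive dimensional over a moduli factor on which $\psi_{1}$ is nef and big, so $(\psi_{1}^{\vee})^{\otimes m}$ has no non-zero global sections; the higher vanishing $H^{>0}(\,\cdot\,,\psi_{1}^{\vee})=0$ of Theorem \ref{righas} lets me transport this vanishing from the stack to the coarse stratum $\Sigma$ via the Leray spectral sequence of the coarse moduli map. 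Combining these vanishings over all components yields $H^{0}(X,\mathcal{E}xt^{1}(\Omega_{X},\mathcal{O}_{X}))=0$, and therefore $\Ext^{1}(\Omega_{X},\mathcal{O}_{X})=0$, that is $X$ is rigid.

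The main obstacle I expect is the singularity analysis in the weighted setting rather than the final cohomology vanishing. In the classical case $\overline{M}_{g,n}$ treated in \cite{FM} the components of $\Sing(X)$ and the line bundles on them are well understood; here the Hassett weights allow marked points to collide and contract boundary divisors, so one must check that no new singular strata appear, that the surviving automorphism loci are still glued products of Hassett stacks, and --- crucially --- that for $g+n>4$ every component of the support of $\mathcal{E}xt^{1}(\Omega_{X},\mathcal{O}_{X})$ carries a genuinely negative $\psi$-power. This is exactly what fails when $g+n=4$, where isolated or $\psi$-trivial strata reintroduce global sections and only the weaker Proposition \ref{ltdhas} survives. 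Verifying this dichotomy uniformly over all admissible weight data $A[n]$ is the delicate point.
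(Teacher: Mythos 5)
Your proposal follows essentially the same route as the paper: reduce to $H^{0}(X,\mathcal{E}xt^{1}(\Omega_{X},\mathcal{O}_{X}))=0$ via Proposition \ref{ltdhas}, identify the codimension two components of $\Sing(X)$ as transversal $A_{1}$ and $\tfrac{1}{3}(1,1)$ loci, and kill their contributions using the machinery of \cite[Sections 5.4, 5.5]{FM} together with the $\psi^{\vee}$-vanishing of Theorem \ref{righas}. The one point you flag as delicate but leave open --- that the weighted setting introduces no new singular strata --- is settled in the paper by observing that a reduction morphism only contracts rational tails without moduli, so $\Aut(C,(x_1,\dots,x_n))\cong\Aut(\Gamma,(y_1,\dots,y_n))$ and the singular locus is inherited from $\overline{M}_{g,n}$ via \cite[Proposition 5.7]{FM}; the paper also explicitly dispatches the components of codimension greater than two by \cite[Lemmas 2.4, 2.5]{Fan}, a step your argument should include.
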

\begin{proof}
Let $\rho:\overline{M}_{g,n}\rightarrow\overline{M}_{g,A[n]}$ be the reduction morphism. Let $[C,(x_1,...,x_n)]\in \overline{M}_{g,A[n]}$ be a point with $x_{i_1} = ... = x_{i_r}$, and $[\Gamma,(y_1,...,y_n)]\in\rho^{-1}([C,(x_1,...,x_n)])\subset\overline{M}_{g,n}$. Then $\Gamma = \Gamma_1\cup ...\cup \Gamma_k\cup \Gamma^{'}$ where $\Gamma_1,..., \Gamma_k$ are rational components contracted to the point $x_{i_1} = ... = x_{i_r}\in C$, and $\Gamma^{'}$ is isomorphic to $C$. Therefore, we have that $\Aut(C,(x_1,...,x_n))\cong \Aut(\Gamma,(y_1,...,y_n))$.

Now, let us consider the following codimension two, that is of maximal dimension, irreducible components of the singular locus of $\overline{M}_{g,A[n]}$:
\begin{itemize}
\item[-] $Z_i$ for $i = 4,6$, is the codimension two loci parametrizing curves with an elliptic tail having four and six automorphisms respectively;
\item[-] $Y$ is the locus parametrizing reducible curves $E\cup C$ where $E$ is an elliptic curve with a marked point which is fixed by the elliptic involution, and $C$ is a curve of genus $g-1$ with $n-1$ marked points;
\item[-] $W$ is the locus parametrizing reducible curves $C_1\cup C_2$ where $C_1$ and $C_2$ are of genus two and $g-2$ respectively, the marked points are on $C_2$, and $C_1\cap C_2$ is a fixed point of the hyperelliptic involution on $C_1$.
\end{itemize} 

By the observation on the automorphism groups of the curves in the first part of the proof and \cite[Proposition 5.7]{FM}, we have that when $g+n > 4$ the only codimension two irreducible components of $\Sing(\overline{M}_{g,A[n]
})$ are $Z_4, Z_6, Y$ and $W$. Furthermore, each component contains dense open subsets, denoted by a superscript zero, with complement of codimension at least two such that $\overline{M}_{g,A[n]
}$ has transversal $A_1$ singularities along $Z^0_4, Y^0$ and $W^0$, and transversal $\frac{1}{3}(1,1)$ singularities along $Z^0_6$. 

By Proposition \ref{ltdhas} we know that $\overline{M}_{g,A[n]}$ does not have locally trivial deformations. Therefore, it is enough to prove that $H^{0}(\overline{M}_{g,A[n]},\mathcal{E}xt^{1}(\Omega_{\overline{M}_{g,A[n]}},\mathcal{O}_{\overline{M}_{g,A[n]}}))=0$.\\
Note that $\mathcal{E}xt^{1}(\Omega_{\overline{M}_{g,A[n]}},\mathcal{O}_{\overline{M}_{g,A[n]}})$ is a coherent sheaf supported on $\Sing(\overline{M}_{g,A[n]})$. By \cite[Lemmas 2.4, 2.5]{Fan} there are no sections of $\mathcal{E}xt^{1}(\Omega_{\overline{M}_{g,A[n]}},\mathcal{O}_{\overline{M}_{g,A[n]}})$ supported on the components of $\Sing(\overline M_{g,A[n]})$ of codimension greater than two.

Now, to conclude it is enough to argue as in \cite[Theorem 5.13]{FM} by using the deformation theory of varieties with transversal $A_1$ and $\frac{1}{3}(1,1)$ singularities developed in \cite[Sections 5.4, 5.5]{FM}.
\end{proof}

\begin{Remark}
Note that by Remark \ref{n<3} we have that $\overline{M}_{1,A[2]}\cong\overline{M}_{1,2}$ for any weight data. Therefore, by \cite[Theorem 4.8]{FM} $\overline{M}_{1,A[2]}$ does not have locally trivial deformations, while its family of first order infinitesimal deformations is non-singular of dimension six and the general deformation is smooth.
\end{Remark}

\subsection{Automorphisms of Hassett spaces in arbitrary characteristic}\label{Saut}
In this section we apply the rigidity results in Section \ref{righassett} to extend the main results on the automorphism groups of Hassett spaces in \cite{MM} over an arbitrary field. In order to lift automorphisms from zero to positive characteristic we will use the techniques developed in \cite[Section 1]{FM} considering the ring $W(K)$ of Witt vectors over $K$, see \cite{Wi} for details.

For our purposes it is enough to keep in mind that $W(K)$ is a discrete valuation ring with a closed point $x\in\Spec(W(K))$ with residue field $K$, and a generic point $\xi\in\Spec(W(K))$ with residue field of characteristic zero.

Note that not all permutations of the markings define an automorphism of the space $\overline{M}_{g,A[n]}$. Indeed in order to define an automorphism, permutations have to preserve the weight data in a suitable sense.

For instance, consider Hassett space $\overline{M}_{1,A[4]}$ with weights $(1,1/3,1/3,1/3)$ and the divisor parametrizing reducible curves $C_{1}\cup C_{2}$, where $C_{1}$ has genus zero and markings $(1,1/3,1/3)$, and $C_{2}$ has genus one and marking $1/3$. After the transposition $1\leftrightarrow 4$ the genus zero component has markings $(1/3,1/3,1/3)$, so it is contracted. This means that the transposition $1\leftrightarrow 4$ induces just a birational automorphism of $\overline{M}_{1,A[4]}$ contracting a divisor on a codimension two subscheme. This example leads us to the following definition.

\begin{Definition}\label{atrans}
A transposition $i\leftrightarrow j$ of two marked points is \textit{admissible} if and only if for any $h_{1},...,h_{r}\in\{1,...,n\}\setminus\{i,j\}$, with $r\geq 2$,
$$a_{i}+\sum_{k=1}^r a_{h_{k}}\leq 1 \iff
a_{j}+\sum_{k=1}^r a_{h_{k}}\leq 1.$$
We denote by $\mathcal{S}_{A[n]}\subseteq S_n$ the subgroup of permutations generated by admissible transpositions.
\end{Definition}

We begin by taking into account Hassett spaces appearing in Construction \ref{kblu}. 

\begin{Theorem}\label{aut0}
Let $K$ be any field. For Hassett spaces appearing in Construction \ref{kblu} we have that if $2\leq r\leq n-4$ then:
\begin{itemize}
\item[-] $\Aut(\overline{M}_{0,A_{r,1}[n]}^K)\cong S_{n-r}\times S_r$, 
\item[-] $\Aut(\overline{M}_{0,A_{r,s}[n]}^K)\cong S_{n-r-1}\times S_r$, if $1 < s < n-r-2$,
\item[-] $\Aut(\overline{M}_{0,A_{r,n-r-2}[n]}^K)\cong S_{n-r-1}\times S_{r+1}$, 
\end{itemize}
and if $r = n-3$ then $s =1$, $\overline{M}_{0,A_{n-3,1}[n]}^K\cong\overline{M}_{0,n}^K$, and $\Aut(\overline{M}_{0,A_{n-3,1}[n]}^K)\cong S_n$ for any $n\geq 5$.\\
Finally, if $\ch(K)=0$ for the Losev-Manin moduli space we have:
$$\Aut(\overline{M}_{0,A_{1,n-3}[n]}^K)\cong (K^{*})^{n-3}\rtimes (S_2\times S_{n-2})$$
\end{Theorem}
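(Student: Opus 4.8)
The plan is to prove a lower and an upper bound for $\Aut(\overline{M}_{0,A_{r,s}[n]}^K)$ and match them. First I would identify the subgroup induced by admissible permutations of the markings. By Definition \ref{atrans} a transposition is admissible exactly when it preserves the stability inequalities, and such a transposition induces an automorphism of $\overline{M}_{0,A_{r,s}[n]}$ over $\mathbb{Z}$, hence over any field $K$, so $\mathcal{S}_{A_{r,s}[n]}\subseteq\Aut(\overline{M}_{0,A_{r,s}[n]}^K)$. Writing the weight vector as $n-r-1$ light markings of weight $1/(n-r-1)$, one middle marking of weight $s/(n-r-1)$, and $r$ heavy markings of weight $1$, a direct check of the inequality in Definition \ref{atrans} shows that transpositions of two light, or of two heavy, markings are always admissible; that a light and a heavy marking are never admissibly interchangeable in the range $2\le r\le n-4$; and that the middle marking is admissibly interchangeable with a light marking precisely when $s=1$ and with a heavy marking precisely when $s=n-r-2$ (the binding computation being that the minimal constraint $(s+2)/(n-r-1)\le 1$ fails, within the range $1\le s\le n-r-2$, exactly when $s=n-r-2$). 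This yields $\mathcal{S}_{A_{r,1}[n]}\cong S_{n-r}\times S_r$, then $\mathcal{S}_{A_{r,s}[n]}\cong S_{n-r-1}\times S_r$ for $1<s<n-r-2$, and $\mathcal{S}_{A_{r,n-r-2}[n]}\cong S_{n-r-1}\times S_{r+1}$, giving the claimed groups as lower bounds; the case $r=n-3$ is $\overline{M}_{0,A_{n-3,1}[n]}^K\cong\overline{M}_{0,n}^K$ by Construction \ref{kblu}, with lower bound $S_n$.

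Next I would establish the same groups as upper bounds in characteristic zero. Over an algebraically closed field of characteristic zero this is the content of the automorphism computations in \cite{MM} and \cite{BM} (and, for $\overline{M}_{0,n}$, the equality $\Aut=S_n$ for $n\ge 5$); descent from $\overline{K'}$ to $K'$ together with the lower bound then gives equality over any characteristic zero field $K'$. The Losev--Manin space $\overline{M}_{0,A_{1,n-3}[n]}^K=\overline{L}_{n-2}$ of Remark \ref{LM} is toric of dimension $n-3$, so its automorphism group acquires the extra factor $(K^{*})^{n-3}$; over a characteristic zero field the full group $(K^{*})^{n-3}\rtimes(S_2\times S_{n-2})$ is read off from the toric description (or cited from \cite{MM}), which is exactly why the statement is restricted to $\ch(K)=0$ in this case.

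To pass to positive characteristic I would invoke the lifting machinery of \cite[Section 1]{FM} together with the rigidity of Theorem \ref{righas}. Fix $K$ with $\ch(K)=p>0$ and $\phi\in\Aut(X)$, where $X:=\overline{M}_{0,A_{r,s}[n]}^K$; since $\overline{M}_{0,A_{r,s}[n]}$ is smooth and proper over $\mathbb{Z}$ it has a smooth proper model $\mathcal{X}$ over the Witt ring $W(K)$, with special fibre $X$ and generic fibre $\mathcal{X}_\xi$ over a field of characteristic zero. The graph $\Gamma_\phi\subset X\times_K X$ defines a point of $\Hilb(\mathcal{X}\times_{W(K)}\mathcal{X}/W(K))$ whose normal bundle is identified with $T_X$; by Theorem \ref{righas} we have $H^1(X,T_X)=0$, so this Hilbert point is unobstructed and the graph deforms over $W(K)$. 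The deformation remains the graph of an automorphism on each fibre, so $\phi$ lifts to an automorphism of $\mathcal{X}$ and restricts to $\phi_\xi\in\Aut(\mathcal{X}_\xi)$; the resulting map $\Aut(X)\to\Aut(\mathcal{X}_\xi)$ is injective, and since graphs of admissible permutations are defined over $\mathbb{Z}$ it carries $\mathcal{S}_{A_{r,s}[n]}$ to $\Aut(\mathcal{X}_\xi)$ by the natural inclusion. Combining with the characteristic zero upper bound, $\phi_\xi$ is an admissible permutation, hence so is $\phi$; therefore $\Aut(X)=\mathcal{S}_{A_{r,s}[n]}$, the asserted group.

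The main obstacle I expect is this lifting step: one must verify that the deformation of $\Gamma_\phi$ over $W(K)$ stays the graph of an automorphism (using properness and the open-and-closed nature of the condition that the first projection be an isomorphism), and that the induced map $\Aut(X)\to\Aut(\mathcal{X}_\xi)$ is well defined and injective independently of the chosen lift. This is exactly where the combination of the rigidity $H^1(X,T_X)=0$ and the finiteness of the characteristic zero automorphism group (which forces $H^0(\mathcal{X}_\xi,T_{\mathcal{X}_\xi})=0$, so graphs are isolated) is needed, and where I would lean on \cite[Section 1]{FM}. A secondary subtlety is the case analysis of Definition \ref{atrans} identifying $\mathcal{S}_{A_{r,s}[n]}$, in particular confirming that no light--heavy transposition is admissible for $2\le r\le n-4$ and handling the boundary identification $\overline{M}_{0,A_{n-3,1}[n]}\cong\overline{M}_{0,n}$.
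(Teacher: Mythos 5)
Your proposal follows essentially the same route as the paper: lower bound from admissible permutations defined over $\mathbb{Z}$, upper bound over algebraically closed characteristic-zero fields from \cite{MM} and \cite{BM}, descent to arbitrary characteristic-zero fields via the injection of \cite[Proposition 1]{FM}, and lifting from characteristic $p$ through the Witt vectors using the rigidity $H^1(T)=0$ of Theorem \ref{righas} together with $H^0(T)=0$ in characteristic zero (the content of \cite[Theorem 1.6]{FM}). The only difference is one of detail: you unpack the combinatorial identification of $\mathcal{S}_{A_{r,s}[n]}$ and the Hilbert-scheme mechanics of the lifting step, both of which the paper delegates to citations.
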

\begin{proof}
Let $K$ be a field of characteristic zero, and let $\overline{K}$ be its algebraic closure. By \cite[Proposition 1]{FM} there exists an injective morphism of groups
$$\chi:\Aut(\overline{M}_{0,A[n]}^K)\rightarrow\Aut(\overline{M}_{0,A[n]}^{\overline{K}})$$
for any weight data $A[n]$. To conclude that, for Hassett spaces appearing in the statement, $\chi$ is surjective it is enough to apply \cite[Theorem 1]{MM}.

Now, let $K_p$ be a field of characteristic $p>0$, and $W(K_p)$ the ring of Witt vectors of $K_p$ with residue field $K$ of characteristic zero. By Theorem \ref{righas} we have that $H^1(\overline{M}_{0,A[n]}^{K_p}, T_{\overline{M}_{0,A[n]}^{K_p}})=0$. Furthermore, if $r\neq 1$ and $s\neq n-3$ by the first part of the proof we know that $\Aut(\overline{M}_{0,A_{r,s}[n]}^K)$ is finite. Since $\ch(K)=0$ this yields $H^0(\overline{M}_{0,A[n]}^{K}, T_{\overline{M}_{0,A[n]}^{K}})=0$. Now, by \cite[Theorem 1.6]{FM} we get an injective morphism of groups
$$\chi_p:\Aut(\overline{M}_{0,A_{r,s}[n]}^{K_p})\rightarrow\Aut(\overline{M}_{0,A_{r,s}[n]}^K)$$
which by the first part of proof is surjective as well.
\end{proof}

Now, let us move to the case $g\geq 1$.


\begin{Theorem}\label{autg}
Let $K$ be a field with $\ch(K)\neq 2$. If $g\geq 1$ and $2g-2+n\geq 3$ then
$$\Aut(\overline{\mathcal{M}}_{g,A[n]}^K)\cong\Aut(\overline{M}_{g,A[n]}^K)\cong \mathcal{S}_{A[n]}.$$
Furthermore, if $K$ is algebraically closed with $\ch(K)\neq 2, 3$ then we have
\begin{itemize}
\item[-] $\Aut(\overline{M}_{1,A[2]}^K)\cong (K^{*})^{2}$ while $\Aut(\overline{\mathcal{M}}_{1,A[2]}^K)$ is trivial,
\item[-] $\Aut(\overline{M}_{1,A[1]}^K)\cong PGL(2,K)$ while $\Aut(\overline{\mathcal{M}}_{1,A[1]}^K)\cong K^{*}$.
\end{itemize}
\end{Theorem}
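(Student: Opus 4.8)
The plan is to follow the strategy of Theorem \ref{aut0}: first establish the asserted isomorphisms over a field of characteristic zero by appealing to the known results of \cite{MM14}, \cite{MM}, \cite{BM}, \cite{Ma} and \cite{Ma16}, and then propagate them to positive characteristic using the rigidity proved in Theorem \ref{righas} together with the Witt vector lifting technique of \cite[Section 1]{FM}. Throughout, the inclusion $\mathcal{S}_{A[n]}\subseteq\Aut(\overline{M}_{g,A[n]}^K)$ holds over any field, since admissible transpositions are defined combinatorially from the weight data and hence act on the moduli functor in every characteristic; the same transpositions lift to the stack, giving $\mathcal{S}_{A[n]}\subseteq\Aut(\overline{\mathcal{M}}_{g,A[n]}^K)$. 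The content of the theorem is therefore the reverse inclusions, together with the identification of the stack and coarse automorphism groups.

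First I would treat the case $2g-2+n\geq 3$ over a field $K$ of characteristic zero. When $K$ is algebraically closed the isomorphisms $\Aut(\overline{\mathcal{M}}_{g,A[n]}^K)\cong\Aut(\overline{M}_{g,A[n]}^K)\cong\mathcal{S}_{A[n]}$ are exactly the content of the cited papers; the hypothesis $\ch(K)\neq 2$ is what guarantees that a generic elliptic tail carries only the elliptic involution, so that every automorphism of the coarse space is accounted for by permutations. For a general characteristic-zero field $K$ the injection $\chi:\Aut(\overline{M}_{g,A[n]}^K)\hookrightarrow\Aut(\overline{M}_{g,A[n]}^{\overline{K}})$ of \cite[Proposition 1]{FM}, composed with the already-available inclusion of $\mathcal{S}_{A[n]}$, forces $\chi$ to be an isomorphism onto $\mathcal{S}_{A[n]}$, and the same argument applies verbatim to the stack.

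Next I would pass to a field $K_p$ of characteristic $p>0$. Since $2g-2+n\geq 3$ the group $\mathcal{S}_{A[n]}$ is finite, so over the characteristic-zero residue field $K$ of the Witt vectors $W(K_p)$ finiteness gives $H^{0}(\overline{\mathcal{M}}_{g,A[n]}^{K},T_{\overline{\mathcal{M}}_{g,A[n]}^{K}})=0$, while Theorem \ref{righas} supplies the rigidity $H^{1}(\overline{\mathcal{M}}_{g,A[n]}^{K_p},T_{\overline{\mathcal{M}}_{g,A[n]}^{K_p}})=0$ needed to invoke \cite[Theorem 1.6]{FM}. This yields an injection $\chi_p:\Aut(\overline{\mathcal{M}}_{g,A[n]}^{K_p})\hookrightarrow\Aut(\overline{\mathcal{M}}_{g,A[n]}^{K})\cong\mathcal{S}_{A[n]}$, which is surjective because admissible permutations already realize $\mathcal{S}_{A[n]}$ in characteristic $p$; the identical scheme applied to the coarse space shows $\Aut(\overline{M}_{g,A[n]}^{K_p})\cong\mathcal{S}_{A[n]}$ as well. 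Since both groups are thereby identified with $\mathcal{S}_{A[n]}$, the isomorphism $\Aut(\overline{\mathcal{M}}_{g,A[n]}^{K_p})\cong\Aut(\overline{M}_{g,A[n]}^{K_p})$ is immediate.

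The main obstacle, and the content of the final two bullet points, lies in the excluded cases $(g,n)=(1,1)$ and $(1,2)$, where $2g-2+n<3$ and the automorphism groups are positive-dimensional, so that $H^{0}(T)\neq 0$ and the rigidity-based lifting above no longer forces finiteness. Here I would argue directly over an algebraically closed field with $\ch(K)\neq 2,3$: using Remark \ref{n<3} to identify $\overline{M}_{1,A[1]}^K$ with the $j$-line $\overline{M}_{1,1}^K\cong\mathbb{P}^1_K$ gives $\Aut\cong PGL(2,K)$, while a direct analysis of the residual $K^{*}$-scalings on the universal elliptic curve produces the torus factors $(K^{*})^2$ and $K^{*}$ on the coarse space $\overline{M}_{1,A[2]}^K$ and the stack $\overline{\mathcal{M}}_{1,A[1]}^K$ respectively, the hypothesis $\ch(K)\neq 3$ entering precisely to suppress the extra automorphisms of the $j=0$ curve. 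The triviality of $\Aut(\overline{\mathcal{M}}_{1,A[2]}^K)$ then reflects the fact that the presence of a second rigidifying marked point eliminates the remaining scaling.
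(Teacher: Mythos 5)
Your strategy has a genuine gap in the positive characteristic case. You propose to run the Witt vector lifting of \cite[Theorem 1.6]{FM}, which requires the rigidity $H^{1}(\overline{\mathcal{M}}_{g,A[n]}^{K_p},T_{\overline{\mathcal{M}}_{g,A[n]}^{K_p}})=0$ on the special fibre, i.e.\ over the field $K_p$ of characteristic $p$. But Theorem \ref{righas} establishes rigidity of $\overline{\mathcal{M}}_{g,A[n]}$ over an arbitrary field only for $g=0$; for $g\geq 1$ it is proved only in characteristic zero, because the proof goes through Kodaira vanishing applied to $p_{*}\omega_{\pi}(\Sigma)$. Since the theorem you are proving concerns exactly $g\geq 1$, the hypothesis you need to invoke the lifting machinery is not available, and this step of your argument does not go through. (This is precisely why the analogous lifting argument is legitimate in Theorem \ref{aut0}, which is a genus zero statement.)

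The paper avoids the issue entirely: \cite[Theorem 2]{MM} is already proved over an arbitrary algebraically closed field of characteristic $\neq 2$, so no transfer from characteristic zero is needed. The only descent required is from $\overline{K}$ to a general field $K$ with $\ch(K)\neq 2$, which is done via the injection $\chi:\Aut(\overline{M}_{g,A[n]}^K)\hookrightarrow\Aut(\overline{M}_{g,A[n]}^{\overline{K}})\cong\mathcal{S}_{A[n]}$ of \cite[Proposition 1]{FM}, surjectivity being clear since admissible permutations are defined over the prime field. The comparison between stack and coarse space is then handled by the injection $\Aut(\overline{\mathcal{M}}_{g,A[n]}^K)\hookrightarrow\Aut(\overline{M}_{g,A[n]}^K)$ of \cite[Proposition 1.7]{FM} together with the observation that an admissible transposition acts on the stack because contracting a nodally attached rational tail with two marked points is a bijection on points preserving automorphism groups. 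Your treatment of the exceptional cases $(g,n)=(1,1),(1,2)$ via Remark \ref{n<3} and a direct analysis is consistent with the paper, which reduces to $\overline{M}_{1,1}$ and $\overline{M}_{1,2}$ and cites \cite[Proposition 4.4]{FM} and \cite[Remark A.4]{FM}. To repair your proof for $2g-2+n\geq 3$, replace the Witt vector step by a direct appeal to \cite[Theorem 2]{MM} over $\overline{K_p}$ followed by the same descent argument.
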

\begin{proof}
First of all, note that by Remark \ref{n<3} we have $\overline{M}_{1,A[2]}^K\cong\overline{M}_{1,2}^K$, and $\overline{M}_{1,A[1]}^K\cong\overline{M}_{1,1}^K$ for any weight data. Therefore, if $g = 1$ and $n=1, 2$ the statement follows from \cite[Proposition 4.4]{FM} and \cite[Remark A.4]{FM}.

Now, let $(g,n)\notin\{(1,1),(1,2)\}$, and let $K$ be an algebraically closed field with $\ch(K)\neq 2$. In this case the statement follows form \cite[Theorem 2]{MM}.

If $K$ is a field with $\ch(K)\neq 2$, and $\overline{K}$ is its algebraic closure then by \cite[Proposition 1]{FM} we have an injective morphism of groups
$$\chi:\Aut(\overline{M}_{g,A[n]}^K)\rightarrow\Aut(\overline{M}_{g,A[n]}^{\overline{K}})$$
for any weight data $A[n]$. To conclude it is enough to observe that by the first part of the proof $\Aut(\overline{M}_{g,A[n]}^{\overline{K}})\cong \mathcal{S}_{A[n]}$, and any permutation in $\mathcal{S}_{A[n]}$ induces an automorphism of $\Aut(\overline{M}_{g,A[n]}^{\overline{K}})$ as well.

Finally, by \cite[Proposition 1.7]{FM} we have an injective morphism of groups
$$\widetilde{\chi}:\Aut(\overline{\mathcal{M}}_{g,A[n]}^K)\rightarrow\Aut(\overline{M}_{g,A[n]}^K)\cong \mathcal{S}_{A[n]}$$

An admissible transposition defines an automorphism of $\overline{\mathcal{M}}_{g,A[n]}$. Indeed, the contraction of a rational tail with three special points, that is a nodally attached rational tail with two marked points, does not affect either the coarse moduli space or the stack because it is a bijection on points and preserves the automorphism groups of the objects. Therefore, the morphism $\widetilde{\chi}$ is surjective as well.
\end{proof}

\section{Hassett spaces with weights summing to two and the Segre cubic}\label{Ssegre}
In \cite[Section 2.1.2]{Ha} B. Hassett considers a natural variation on the moduli problem of weighted pointed rational stable curves by considering weights of the type $\widetilde{A}[n]= (a_1,...,a_n)$ such that $a_{i}\in\mathbb{Q}$, $0< a_i\leq 1$ for any $i = 1,...,n$, and 
$$\sum_{i=1}^{n}a_{i} =2$$
By \cite[Section 2.1.2]{Ha} we may construct an explicit family of such weighted curves $\mathcal{C}(\widetilde{A})\rightarrow\overline{M}_{0,n}$ over $\overline{M}_{0,n}$ as an explicit blow-down of the universal curve over $\overline{M}_{0,n}$.\\
Furthermore, if $a_i<1$ for any $i=1,...,n$ we may interpret the geometric invariant theory quotient $(\mathbb{P}^1)^n\quot SL_2$ with respect to the linearization $\mathcal{O}(a_1,...,a_n)$ as the moduli space $\overline{M}_{0,\widetilde{A}[n]}$ associated to the family $\mathcal{C}(\widetilde{A})$.

In this section we will show that Hassett spaces with weights summing to two can have non-trivial first order infinitesimal deformations by considering a specific example. 

Let us consider the weight data 
\begin{equation}\label{weights}
A[6] = (1,1/3,1/3,1/3,1/3,1/3), \: \widetilde{A}[6] = (1/3,1/3,1/3,1/3,1/3,1/3)
\end{equation}
and the reduction morphism
\begin{equation}\label{smallres}
\rho:\overline{M}_{0,A[6]}\rightarrow\overline{M}_{0,\widetilde{A}[6]}
\end{equation}
By \cite[Section 6.2]{Ha} the moduli space $\overline{M}_{0,A[6]}$ is the blow-up of $\mathbb{P}^3$ at five points $p_i\in\mathbb{P}^3$ in linear general position. 

Let $|\mathcal{I}_{p_1,...,p_5}(2)|\subset |\mathcal{O}_{\mathbb{P}^3}(2)|$ be the linear system of quadrics in $\mathbb{P}^3$ through the $p_i$'s. Note that $|\mathcal{I}_{p_1,...,p_5}(2)|$ induces a rational map $\phi:\mathbb{P}^3\dasharrow\mathbb{P}^4$ whose image is a hypersurface $\mathcal{S}\subset\mathbb{P}^4$ of degree $\deg(\mathcal{S}) = 2^3-5 =3$. 

Since the base locus of $\phi$ consists exactly of the $p_i$'s we get a morphism $\widetilde{\phi}:\overline{M}_{0,A[6]}\rightarrow\mathbb{P}^4$ fitting in the following commutative diagram
\[
  \begin{tikzpicture}[xscale=3.7,yscale=-1.6]
    \node (A0_0) at (0, 0) {$\overline{M}_{0,A[6]}$};
    \node (A1_0) at (0, 1) {$\mathbb{P}^3$};
    \node (A1_1) at (1, 1) {$\mathcal{S}\subset\mathbb{P}^4$};
    \path (A0_0) edge [->,swap]node [auto] {$\scriptstyle{\pi_{p_i}}$} (A1_0);
    \path (A1_0) edge [->, dashed]node [auto] {$\scriptstyle{\phi}$} (A1_1);
    \path (A0_0) edge [->]node [auto] {$\scriptstyle{\widetilde{\phi}}$} (A1_1);
  \end{tikzpicture}
  \]
where $\pi_{p_i}:\overline{M}_{0,A[6]}\rightarrow\mathbb{P}^3$ is the blow-up of the $p_i$'s. Note that the only curves contracted by $\widetilde{\phi}$ are the strict transforms $\widetilde{L}_{i,j}$ of the then lines $L_{i,j}=\left\langle p_i,p_j\right\rangle$. Therefore $\widetilde{\phi}$ is a small contraction, and since 
$$N_{\widetilde{L}_{i,j}/\overline{M}_{0,A[6]}}\cong \mathcal{O}_{\widetilde{L}_{i,j}}(-1)\oplus \mathcal{O}_{\widetilde{L}_{i,j}}(-1)$$ 
we conclude that $\mathcal{S} = \widetilde{\phi}(\overline{M}_{0,A[6]})\subset\mathbb{P}^4$ is a cubic surface singular at the ten points $q_{i,j}=\widetilde{\phi}(\widetilde{L}_{i,j})$, and these ten points are nodes of $\mathcal{S}$, that is ordinary singularities with $\mult_{q_{i,j}}(\mathcal{S}) = 2$. Note that in dimension greater than two nodes are not finite quotient singularities, therefore they may contribute to the infinitesimal deformations of $\mathcal{S}$.

A cubic hypersurface in $\mathbb{P}^4$ whose singular locus consists of ten nodes is, up to a change of coordinates, a Segre cubic \cite[Proposition 2.1]{Do15}, that is the hypersurface defined by the equations
$$
\left\lbrace\begin{array}{l}
x_0^3+x_1^3+x_2^3+x_3^3+x_4^3+x_5^3=0,\\ 
x_0+x_1+x_2+x_3+x_4+x_5=0.
\end{array}\right. 
$$
where $[x_0:...:x_5]$ are homogeneous coordinates on $\mathbb{P}^5$. The ten nodes are located at the points conjugate to $[1:1:1:-1:-1:-1]$ under the action of $S_6$ permuting the coordinates.

The Segre cubic is a very interesting and peculiar variety in classical algebraic geometry, indeed its Hessian is the Barth-Nieto quintic, its intersection with a hyperplane of the form $x_i = 0$ is the Clebsch cubic surface, while its intersection with a hyperplane of the type $x_i = x_j$ is the Cayley's nodal cubic surface, and finally it is dual to the Igusa quartic $3$-fold in $\mathbb{P}^4$ \cite{Do15}.

The discussion above shows that we may interpret the morphism $\widetilde{\phi}$ as the reduction morphism $\rho:\overline{M}_{0,A[6]}\rightarrow\overline{M}_{0,\widetilde{A}[6]}$, and Hassett space $\overline{M}_{0,\widetilde{A}[6]}$ as the Segre cubic $\mathcal{S}\subset\mathbb{P}^4$. 

\begin{Proposition}\label{AutSeg}
Let $\mathcal{S}\subset\mathbb{P}^4$ be the Segre cubic. Then $\Aut(\mathcal{S})\cong S_6$ is the permutation group on six elements.
\end{Proposition}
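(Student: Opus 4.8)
The plan is to prove the two inclusions $S_6\subseteq\Aut(\mathcal{S})$ and $\Aut(\mathcal{S})\subseteq S_6$ separately, the second being the substantial part. For the first inclusion I would work with the presentation of $\mathcal{S}$ as the hypersurface $\{x_0^3+\dots+x_5^3=x_0+\dots+x_5=0\}\subset\mathbb{P}^5$. The symmetric group $S_6$ acts on $\mathbb{P}^5$ by permuting the coordinates $x_0,\dots,x_5$, and this action visibly preserves both defining equations, hence restricts to an action on $\mathcal{S}$. To see that it is faithful, note that $S_6$ acts on the hyperplane $\{\sum x_i=0\}\cong\mathbb{P}^4$ through the standard representation, which is irreducible and faithful; since no nontrivial permutation can act as a scalar on a nontrivial irreducible representation, the induced homomorphism $S_6\to\Aut(\mathcal{S})\subseteq PGL_5$ is injective. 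Equivalently, one checks directly that $S_6$ permutes the ten nodes $q_{i,j}$ faithfully.

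For the reverse inclusion, the first step is to show that every automorphism of $\mathcal{S}$ is induced by a linear automorphism of $\mathbb{P}^4$. Since $\mathcal{S}$ is a normal hypersurface it is Gorenstein, so its dualizing sheaf $\omega_{\mathcal{S}}$ is a line bundle and is preserved by any $\phi\in\Aut(\mathcal{S})$; by adjunction $\omega_{\mathcal{S}}\cong\mathcal{O}_{\mathcal{S}}(-2)$, whence $\phi^{*}\mathcal{O}_{\mathcal{S}}(2)\cong\mathcal{O}_{\mathcal{S}}(2)$. I would then argue that $\mathcal{O}_{\mathcal{S}}(1)$ is the unique square root of $\mathcal{O}_{\mathcal{S}}(2)$ in $\Pic(\mathcal{S})$: the reduction morphism $\rho:\overline{M}_{0,A[6]}\to\mathcal{S}$ of \eqref{smallres} is birational with $\rho_{*}\mathcal{O}=\mathcal{O}$, so $\rho^{*}:\Pic(\mathcal{S})\hookrightarrow\Pic(\overline{M}_{0,A[6]})=\Pic(Bl_5\mathbb{P}^3)\cong\mathbb{Z}^{6}$ is injective, and in particular $\Pic(\mathcal{S})$ is torsion-free. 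Consequently $\phi^{*}\mathcal{O}_{\mathcal{S}}(1)\cong\mathcal{O}_{\mathcal{S}}(1)$, so $\phi$ acts linearly on $H^{0}(\mathcal{S},\mathcal{O}_{\mathcal{S}}(1))$; since $\mathcal{S}$ is projectively normal the restriction $H^{0}(\mathbb{P}^4,\mathcal{O}(1))\to H^{0}(\mathcal{S},\mathcal{O}_{\mathcal{S}}(1))$ is an isomorphism and $\mathcal{S}$ is embedded by the complete linear system $|\mathcal{O}_{\mathcal{S}}(1)|$, so $\phi$ extends to an element of $PGL_5$ preserving $\mathcal{S}$. This identifies $\Aut(\mathcal{S})$ with the linear stabilizer $\{g\in PGL_5 : g(\mathcal{S})=\mathcal{S}\}$.

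It then remains to identify this linear stabilizer with $S_6$, and this is the step I expect to be the main obstacle. Any $g$ in the stabilizer permutes the ten nodes of $\mathcal{S}$, namely the $\Sing(\mathcal{S})=\{q_{i,j}\}$, as well as the fifteen planes contained in $\mathcal{S}$; since six of the nodes can be checked to lie in linear general position in $\mathbb{P}^4$, the projectivity $g$ is determined by the permutation it induces on the node set, giving an injection into the symmetric group on those ten points. Identifying the nodes with the ten partitions of $\{0,\dots,5\}$ into two triples, the admissible permutations are precisely those compatible with the $(15_4,10_6)$ incidence configuration of planes and nodes, whose combinatorial automorphism group is $S_6$; this is the classical projective geometry of the Segre cubic, for which I would lean on \cite{Do15}. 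Comparing with the first inclusion, where the same $S_6$ was realized by coordinate permutations, yields $\Aut(\mathcal{S})\cong S_6$.
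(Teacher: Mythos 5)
Your proposal is correct in outline but takes a genuinely different route from the paper. The paper's proof never linearizes: it blows up the ten nodes to get a smooth threefold $X$, uses the universal property of the blow-up together with the factorization $\overline{M}_{0,6}\rightarrow\overline{M}_{0,A[6]}\rightarrow\mathcal{S}$ to show $X\cong\overline{M}_{0,6}$ (ruling out small and divisorial contractions for the induced map $\xi$), lifts any $\phi\in\Aut(\mathcal{S})$ to $X$ because $\phi$ preserves $\Sing(\mathcal{S})$, and then quotes $\Aut(\overline{M}_{0,6})\cong S_6$ from \cite{BM}, \cite{Ma}, \cite{FM}; surjectivity comes from the $S_6$-action on the GIT quotient $(\mathbb{P}^1)^6\quot SL_2$. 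You instead reduce to the linear stabilizer in $PGL_5$ via $\omega_{\mathcal{S}}\cong\mathcal{O}_{\mathcal{S}}(-2)$, torsion-freeness of $\Pic(\mathcal{S})$ (your deduction from injectivity of $\rho^{*}$ is fine, since $\rho_*\mathcal{O}=\mathcal{O}$), and projective normality of a hypersurface, and then cut the stabilizer down by the $(15_4,10_6)$ node--plane configuration. The paper's approach buys the result almost for free from machinery already in place and fits the theme of the article; yours is self-contained classical projective geometry, at the cost of outsourcing the two genuinely nontrivial verifications: that six of the ten nodes form a projective frame (true, and checkable from the coordinates of the $S_6$-orbit of $[1:1:1:-1:-1:-1]$), and that the combinatorial automorphism group of the configuration is exactly $S_6$ rather than something larger. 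That last point deserves care if written out in full: one must argue that a configuration automorphism is determined by its action on the fifteen planes, and that the induced permutation of duads preserves the disjointness relation, so the group embeds in $\Aut$ of the triangular graph $T(6)$, which is $S_6$; citing \cite{Do15} for this is legitimate but, since Dolgachev also proves the proposition itself there, you should cite the configuration statement specifically to avoid circularity.
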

\begin{proof}
Let us consider the weights $A[6]$ and $\widetilde{A}[6]$ in (\ref{weights}), and the small resolution $\rho:\overline{M}_{0,A[6]}\rightarrow\overline{M}_{0,\widetilde{A}[6]}$ in (\ref{smallres}). Since $\Sing(\mathcal{S})$ cosists of ten points which are ordinary double points we may resolve the singularities of $\mathcal{S}$ just by blowing-up these ten points. Let $f:X\rightarrow\mathcal{S}$ be the blow-up. 

Now, by Construction \ref{kblu} the blow-up of $\overline{M}_{0,A[6]}$ along the strict transforms of the ten lines through two of the five points is isomorphic to $\overline{M}_{0,6}$, and we have a reduction morphism $\overline{\rho}:\overline{M}_{0,6}\rightarrow\overline{M}_{0,A[6]}$.

Note that the morphism $\rho\circ\overline{\rho}:\overline{M}_{0,6}\rightarrow\mathcal{S}$ maps the exceptional divisor $E_{i,j}$ over the strict transform $\widetilde{L}_{i,j}$ to the singular point $q_{i,j}\in\mathcal{S}$. Therefore, by the universal property of the blow-up \cite[Proposition 7.4]{Har} there exists a unique morphism $\xi:\overline{M}_{0,6}\rightarrow X$ such that the following diagram
\[
\begin{tikzpicture}[xscale=2.5,yscale=-1.3]
    \node (A0_0) at (0, 0) {$\overline{M}_{0,6}$};
    \node (A0_1) at (1, 0) {$X$};
    \node (A1_0) at (0, 1) {$\overline{M}_{0,A[6]}$};
    \node (A1_1) at (1, 1) {$\mathcal{S}$};
    \path (A0_0) edge [->]node [auto] {$\scriptstyle{\xi}$} (A0_1);
    \path (A1_0) edge [->]node [auto] {$\scriptstyle{\rho}$} (A1_1);
    \path (A0_1) edge [->]node [auto] {$\scriptstyle{f}$} (A1_1);
    \path (A0_0) edge [->]node [auto,swap] {$\scriptstyle{\overline{\rho}}$} (A1_0);
  \end{tikzpicture} 
\]
commutes. Now, note that since $X$ is smooth $\xi$ can not be a small contraction. On the other hand, since $\rho$ is small and $\xi$ must map the exceptional divisor $E_{i,j}$ onto the exceptional divisor $F_{i,j}\subset X$ over $q_{i,j}\in \mathcal{S}$ the morphism $\xi$ can not be a divisorial contraction either. Therefore, $\xi$ is an isomorphism and $X\cong \overline{M}_{0,6}$. 

Now, let $\phi\in\Aut(\mathcal{S})$ be an automorphism. Then $\phi$ must preserve the set $\Sing(\mathcal{S})$ of the ten singular points. Therefore, by \cite[Corollary 7.15]{Har} $\phi$ lifts to an automorphism $\widetilde{\phi}$ of $X\cong\overline{M}_{0,6}$, and we get an injective morphism of groups
$$\chi:\Aut(\mathcal{S})\rightarrow\Aut(\overline{M}_{0,6})$$
Now, to conclude it is enough to recall that $\mathcal{S}\cong\overline{M}_{0,\widetilde{A}[6]}$ is the geometric invariant theory quotient $(\mathbb{P}^1)^6\quot SL_2$ with respect to the symmetric linearization $\mathcal{O}(1/3,...,1/3)$, hence $S_6$ acts on $\mathcal{S}\cong\overline{M}_{0,\widetilde{A}[6]}$ by permuting the marked points, and that $\Aut(\overline{M}_{0,6})\cong S_6$ \cite[Theorem 3]{BM}, \cite[Theorem 3.10]{Ma}, \cite[Theorem 1.1]{FM}. Hence $\chi$ is surjective as well.
\end{proof}

Now, we are ready to study the infinitesimal deformations of the Segre cubic.

\begin{Theorem}\label{segre}
Hassett moduli space $\overline{M}_{0,\widetilde{A}[6]}$ with weights $a_1 = ... = a_6 = \frac{1}{3}$ does not have locally trivial deformations, while its family of first order infinitesimal deformations is non-singular of dimension ten and the general deformation is smooth.
\end{Theorem}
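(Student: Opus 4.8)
The plan is to exploit the fact, established just above, that $\mathcal{S}=\overline{M}_{0,\widetilde{A}[6]}$ is a cubic hypersurface in $\mathbb{P}^{4}$ whose singular locus consists of exactly ten nodes $q_{i,j}$, and to read off all first order deformations from the local-to-global spectral sequence for $\Ext$ recalled in Section \ref{HK}:
$$0\to H^{1}(\mathcal{S},T_{\mathcal{S}})\to\Ext^{1}(\Omega_{\mathcal{S}},\mathcal{O}_{\mathcal{S}})\to H^{0}(\mathcal{S},\mathcal{E}xt^{1}(\Omega_{\mathcal{S}},\mathcal{O}_{\mathcal{S}}))\to H^{2}(\mathcal{S},T_{\mathcal{S}}),$$
where $T_{\mathcal{S}}=\mathcal{H}om(\Omega_{\mathcal{S}},\mathcal{O}_{\mathcal{S}})$. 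I would prove three things: that $\mathcal{E}xt^{1}(\Omega_{\mathcal{S}},\mathcal{O}_{\mathcal{S}})$ is a length ten skyscraper sheaf, so its space of global sections is ten dimensional; that $H^{1}(\mathcal{S},T_{\mathcal{S}})=0$, which is exactly the statement that $\mathcal{S}$ has no locally trivial deformations and simultaneously makes $\Ext^{1}(\Omega_{\mathcal{S}},\mathcal{O}_{\mathcal{S}})$ inject into this ten dimensional space; and that the last edge map vanishes, so that the injection is an isomorphism and $\dim\Ext^{1}(\Omega_{\mathcal{S}},\mathcal{O}_{\mathcal{S}})=10$.

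For the local statement, a three dimensional ordinary double point has one dimensional $T^{1}$ (its Tjurina algebra is $\mathbb{C}$), so $\mathcal{E}xt^{1}(\Omega_{\mathcal{S}},\mathcal{O}_{\mathcal{S}})$ is supported at the ten nodes with stalk of length one at each, giving $H^{0}(\mathcal{S},\mathcal{E}xt^{1}(\Omega_{\mathcal{S}},\mathcal{O}_{\mathcal{S}}))\cong\mathbb{C}^{10}$ and no higher cohomology. For the vanishing of $H^{1}(\mathcal{S},T_{\mathcal{S}})$ I would use that $\mathcal{S}$ is a cubic hypersurface, so there is a four term exact sequence
$$0\to T_{\mathcal{S}}\to T_{\mathbb{P}^{4}}|_{\mathcal{S}}\xrightarrow{\,df\,}\mathcal{O}_{\mathcal{S}}(3)\to\mathcal{E}xt^{1}(\Omega_{\mathcal{S}},\mathcal{O}_{\mathcal{S}})\to 0,$$
which I split into $0\to T_{\mathcal{S}}\to T_{\mathbb{P}^{4}}|_{\mathcal{S}}\to\mathcal{I}\to 0$ and $0\to\mathcal{I}\to\mathcal{O}_{\mathcal{S}}(3)\to\mathcal{E}xt^{1}(\Omega_{\mathcal{S}},\mathcal{O}_{\mathcal{S}})\to 0$. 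Using the Euler sequence one computes $H^{0}(T_{\mathbb{P}^{4}}|_{\mathcal{S}})\cong\mathbb{C}^{24}$, $H^{1}(T_{\mathbb{P}^{4}}|_{\mathcal{S}})=0$ and $H^{1}(\mathcal{O}_{\mathcal{S}}(3))=0$, while $H^{0}(\mathcal{S},T_{\mathcal{S}})=0$ because $\Aut(\mathcal{S})\cong S_{6}$ is finite by Proposition \ref{AutSeg}. Chasing the two sequences then reduces $H^{1}(\mathcal{S},T_{\mathcal{S}})=0$ to the surjectivity of the map $H^{0}(\mathcal{O}_{\mathcal{S}}(3))\to H^{0}(\mathcal{E}xt^{1}(\Omega_{\mathcal{S}},\mathcal{O}_{\mathcal{S}}))\cong\mathbb{C}^{10}$, which becomes a dimension count once that map is understood.

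That map is the heart of the matter, and I expect it to be the main obstacle. Under the identification of each local $T^{1}$ with $\mathbb{C}$ it sends a cubic $g$ to the vector $(g(q_{i,j}))_{i,j}$ of its values at the ten nodes, so its surjectivity is precisely the statement that the ten nodes impose independent conditions on cubics in $\mathbb{P}^{4}$; equivalently, that the degree three part of the Jacobian ring of $\mathcal{S}$ has dimension ten. I would check this either by a direct computation with the nodes, which are the $S_{6}$-orbit of $[1:1:1:-1:-1:-1]$, exhibiting for each node a cubic vanishing on the other nine, or by an $S_{6}$-equivariant argument comparing the permutation representation $\mathbb{C}^{10}$ on the set of nodes with the space of cubics. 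Granting independent conditions, the same input forces the edge map $H^{0}(\mathcal{E}xt^{1}(\Omega_{\mathcal{S}},\mathcal{O}_{\mathcal{S}}))\to H^{2}(\mathcal{S},T_{\mathcal{S}})$ to vanish, since the composite $H^{0}(\mathcal{O}_{\mathcal{S}}(3))\to\Ext^{1}(\Omega_{\mathcal{S}},\mathcal{O}_{\mathcal{S}})\to H^{0}(\mathcal{E}xt^{1}(\Omega_{\mathcal{S}},\mathcal{O}_{\mathcal{S}}))$ coming from embedded deformations is that same surjective evaluation map; hence $\Ext^{1}(\Omega_{\mathcal{S}},\mathcal{O}_{\mathcal{S}})\cong\mathbb{C}^{10}$, and together with $H^{1}(\mathcal{S},T_{\mathcal{S}})=0$ this yields the absence of locally trivial deformations and the dimension ten.

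Finally, for non-singularity of the deformation space and smoothness of the general deformation I would avoid computing the obstruction space and instead exhibit an explicit smooth versal family. The cubic threefolds form the smooth space $\mathbb{P}^{34}=\mathbb{P}(H^{0}(\mathbb{P}^{4},\mathcal{O}(3)))$; since $\Aut(\mathcal{S})$ is finite the $PGL_{5}$-orbit of $[\mathcal{S}]$ has dimension $24$, so a transverse slice $B$ through $[\mathcal{S}]$ is smooth of dimension $34-24=10$. The restriction of the universal cubic to $B$ is a deformation of $\mathcal{S}$ whose Kodaira-Spencer map $T_{[\mathcal{S}]}B\to\Ext^{1}(\Omega_{\mathcal{S}},\mathcal{O}_{\mathcal{S}})$ is, by the previous paragraph, an isomorphism of ten dimensional spaces; hence this family is versal and the base of the semiuniversal deformation is smooth of dimension ten, so the family of first order infinitesimal deformations is non-singular and unobstructed. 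Since a general cubic threefold is smooth and, by surjectivity of the evaluation map, a general member of $B$ smooths all ten nodes simultaneously, the general deformation of $\mathcal{S}$ is smooth, which completes the argument.
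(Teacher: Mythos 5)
Your proposal is correct in outline and shares most of the paper's scaffolding --- the local computation showing each node is an ordinary double point contributing a one-dimensional $\mathcal{E}xt^{1}$, the four-term sequence $0\to T_{\mathcal{S}}\to T_{\mathbb{P}^4|\mathcal{S}}\to\mathcal{O}_{\mathcal{S}}(3)\to\mathcal{E}xt^{1}(\Omega_{\mathcal{S}},\mathcal{O}_{\mathcal{S}})\to 0$, and the use of $\Aut(\mathcal{S})\cong S_6$ to kill $H^{0}(\mathcal{S},T_{\mathcal{S}})$ --- but it diverges at the two decisive steps. For $H^{1}(\mathcal{S},T_{\mathcal{S}})=0$ you reduce to the assertion that the ten nodes impose independent conditions on cubics and flag that as the main obstacle; the paper sidesteps this entirely by taking Euler characteristics in the four-term sequence, $\chi(T_{\mathcal{S}})=\chi(T_{\mathbb{P}^4|\mathcal{S}})-\chi(\mathcal{O}_{\mathcal{S}}(3))+\chi(\mathcal{E}xt^{1})=24-34+10=0$, so that $h^{1}(T_{\mathcal{S}})=h^{0}(T_{\mathcal{S}})=0$ once the higher cohomology is disposed of --- the independent-conditions statement then falls out as a corollary rather than being an input. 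You should either adopt that shortcut or actually carry out the rank (or $S_6$-equivariant) computation you sketch: as written it is an unproven claim on which everything downstream depends, even though it is true and finite to check. For the remaining assertions the paper computes $\Ext^{2}(\Omega_{\mathcal{S}},\mathcal{O}_{\mathcal{S}})=0$ from the local-to-global spectral sequence, using $\Ext^{2}(\Omega_{R},R)=0$ at each node, the vanishing of $H^{1}$ of a skyscraper sheaf, and $H^{2}(\mathcal{S},T_{\mathcal{S}})=0$; your alternative via a ten-dimensional slice of the family of cubic threefolds transverse to the $PGL_{5}$-orbit is a legitimate and more geometric substitute, and it delivers smoothness of the general deformation at the same time. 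Be aware, though, that bijectivity of the Kodaira--Spencer map at first order does not by itself give versality: you need the standard smoothness of the forgetful map from embedded to abstract deformations of a hypersurface, which here follows from $H^{1}(\mathcal{O}_{\mathcal{S}}(3))=H^{2}(\mathcal{O}_{\mathcal{S}})=0$; with that reference supplied, your route closes.
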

\begin{proof}
All along the proof we will identify $\overline{M}_{0,\widetilde{A}[6]}$ with the Segre cubic $\mathcal{S}\subset\mathbb{P}^4$. The first order infinitesimal deformations of $\mathcal{S}$ are parametrized by the group $\Ext^{1}(\Omega_{\mathcal{S}},\mathcal{O}_{\mathcal{S}})$. The sheaf $\mathcal{E}xt^{1}(\Omega_{\mathcal{S}},\mathcal{O}_{\mathcal{S}})$ is supported on the singularities of $\mathcal{S}$, and since $\mathcal{S}$ has isolated singularities $\mathcal{E}xt^{1}(\Omega_{\mathcal{S}},\mathcal{O}_{\mathcal{S}})$ can be computed separately for each singular point.

Recall that $\mathcal{S}$ is singular at ten nodes, let $p\in \mathcal{S}$ be one of these nodes. Then, \'etale locally, in a neighborhood of $p$ the Segre cubic $\mathcal{S}$ is isomorphic to an \'etale neighborhood of the singularity
$$S = \{f(x,y,z,w) = x^2w+xy-zw = 0\}\subset\mathbb{A}^{4}$$

Indeed, note that the partial derivatives of $f$ vanish simultaneously just at $(0,0,0,0)\in\mathbb{A}^4$, and $\frac{\partial f}{\partial x\partial y}=1$. Furthermore, the projective tangent cone of $S$ at $(0,0,0,0)$ is a smooth quadric surface in $\mathbb{P}^3$. This means that $S$ has an ordinary singularity of multiplicity two at the origin.

Let $R = K[x,y,z,w]/(x^2w+xy-zw)$, and let us consider the free resolution
    \[
  \begin{tikzpicture}[xscale=1.5,yscale=-1.2]
    \node (A0_0) at (0, 0) {$0\mapsto R$};
    \node (A0_1) at (1, 0) {$R^{\oplus 4}$};
    \node (A0_2) at (2, 0) {$\Omega_R\mapsto 0$};
    \path (A0_0) edge [->]node [auto] {$\scriptstyle{\psi_J}$} (A0_1);
    \path (A0_1) edge [->]node [auto] {$\scriptstyle{}$} (A0_2);
  \end{tikzpicture}
  \]
of $\Omega_R$, where $\psi_J$ is the matrix of the partial derivatives of $f = x^2w+xy-zw$. Therefore, we get 
$$
\begin{array}{l}
\Ext^{1}(\Omega_{R},R)\cong R/\im(\psi_J^{t})\cong K[x,y,z,w]/(x^2w+xy-zw, 2xw+y, x, -w, x^2-z)\cong K,\\ 
\Ext^{2}(\Omega_{R},R)=0.
\end{array} 
$$

Now, let us consider the exact sequence
$$0\mapsto \mathcal{O}_\mathcal{S}(-3)\rightarrow \Omega^{1}_{\mathbb{P}^4|\mathcal{S}}\rightarrow\Omega^{1}_{\mathcal{S}}\mapsto 0$$
by applying $\mathcal{H}om_{\mathcal{O}_\mathcal{S}}(-,\mathcal{O}_\mathcal{S})$ we get
\begin{equation}\label{ex1}
0\mapsto T_\mathcal{S}\rightarrow T_{\mathbb{P}^4|\mathcal{S}}\rightarrow \mathcal{O}_\mathcal{S}(3)\rightarrow \mathcal{E}xt^{1}(\Omega_{\mathcal{S}},\mathcal{O}_{\mathcal{S}})\mapsto 0
\end{equation}

Therefore, $H^i(\mathcal{S},T_\mathcal{S}) = 0$ for $i\geq 2$, and by taking Euler-Poincar\'e characteristics we have
$$\chi(T_\mathcal{S})+\chi(\mathcal{O}_\mathcal{S}(3)) = \chi(T_{\mathbb{P}^4|\mathcal{S}})+\chi(\mathcal{E}xt^{1}(\Omega_{\mathcal{S}},\mathcal{O}_{\mathcal{S}}))$$
and since $\chi(T_{\mathbb{P}^4|\mathcal{S}}) = 24$, and $\chi(\mathcal{O}_\mathcal{S}(3)) = \binom{4+3}{3}-1 = 34$ we get
\begin{equation}\label{eqchi}
\chi(T_\mathcal{S}) = \chi(\mathcal{E}xt^{1}(\Omega_{\mathcal{S}},\mathcal{O}_{\mathcal{S}}))-10
\end{equation}

Note that we may interpret $\chi(\mathcal{E}xt^{1}(\Omega_{\mathcal{S}},\mathcal{O}_{\mathcal{S}})) = h^{0}(\mathcal{S},\mathcal{E}xt^{1}(\Omega_{\mathcal{S}},\mathcal{O}_{\mathcal{S}})) = \sum_{i=1}^{10}\tau_{p_i}$, where $\tau_{p_i}$ is the Tyurina number of the node $p_i\in \mathcal{S}$, that is the rank at $p_i$ of the skyscraper sheaf $\mathcal{E}xt^{1}(\Omega_{\mathcal{S}},\mathcal{O}_{\mathcal{S}})$. Note that since $\Ext^{1}(\Omega_{R},R)\cong K$ we have $\tau_{p_i} = 1$ for any $i = 1,...,10$, and hence Equation (\ref{eqchi}) yields
\begin{equation}\label{eqfin}
h^0(\mathcal{S},T_\mathcal{S})-h^1(\mathcal{S},T_\mathcal{S}) = 0
\end{equation}

Now, by Proposition \ref{AutSeg} we have that $\Aut(\mathcal{S})\cong S_6$. Therefore, $\mathcal{S}$ does not have infinitesimal automorphisms and $H^0(\mathcal{S},T_\mathcal{S})=0$. This last fact together with Equation (\ref{eqfin}) forces $H^1(\mathcal{S},T_\mathcal{S}) = 0$ as well.

So the sequence
$$H^{1}(\mathcal{S},T_{\mathcal{S}})\rightarrow\Ext^{1}(\Omega_{\mathcal{S}},\mathcal{O}_{\mathcal{S}})\rightarrow H^{0}(\mathcal{S},\mathcal{E}xt^{1}(\Omega_{\mathcal{S}},\mathcal{O}_{\mathcal{S}}))\rightarrow H^{2}(\mathcal{S},T_{\mathcal{S}})$$
yields 
$$\dim_{K}(\Ext^{1}(\Omega_{\mathcal{S}},\mathcal{O}_{\mathcal{S}}))= h^{0}(\mathcal{S},\mathcal{E}xt^{1}(\Omega_{\mathcal{S}},\mathcal{O}_{\mathcal{S}}))=10$$

Finally, to compute the dimension of the obstruction space $\Ext^{2}(\Omega_{\mathcal{S}},\mathcal{O}_{\mathcal{S}})$ we use the local-to-global Ext spectral sequence 
$$
H^i(\mathcal{S},\mathcal{E}xt^j(\Omega_{\mathcal{S}},\mathcal{O}_{\mathcal{S}}))\Rightarrow \Ext^{i+j}(\Omega_{\mathcal{S}},\mathcal{O}_{\mathcal{S}})
$$

Note that $H^1(\mathcal{S},\mathcal{E}xt^1(\Omega_{\mathcal{S}},\mathcal{O}_{\mathcal{S}}))=0$ because $\mathcal{E}xt^1(\Omega_{\mathcal{S}},\mathcal{O}_{\mathcal{S}})$ is supported on a zero dimensional scheme. Moreover, by (\ref{ex1}) we have $H^2(\mathcal{S},\mathcal{E}xt^0(\Omega_{\mathcal{S}},\mathcal{O}_{\mathcal{S}}))= H^{2}(\mathcal{S},T_{\mathcal{S}}) =0$. Finally, $\Ext^{2}(\Omega_{R},R)=0$ yields $H^0(\mathcal{S},\mathcal{E}xt^2(\Omega_{\mathcal{S}},\mathcal{O}_{\mathcal{S}}))= 0$ as well.
\end{proof}

\end{document}